\numberwithin{equation}{section}
\newtheorem{definition}{Definition}
\newtheorem{theorem}[definition]{Theorem}
\newtheorem{lemma}[definition]{Lemma}
\newtheorem{proposition}[definition]{Proposition}
\newcommand{\cA}{\mathcal{A}}
\newcommand{\cG}{\mathcal{G}}
\newcommand{\cN}{\mathcal{N}}
\newcommand{\cP}{\mathcal{P}}
\newcommand{\cR}{\mathcal{R}}
\newcommand{\cS}{\mathcal{S}}
\newcommand{\cX}{\mathcal{X}}
\newcommand{\R}{{\rm I}\kern-0.18em{\rm R}}
\newcommand{\h}{{\rm I}\kern-0.18em{\rm H}}
\newcommand{\PP}{{\rm I}\kern-0.18em{\rm P}}
\newcommand{\E}{{\rm I}\kern-0.18em{\rm E}}
\newcommand{\Z}{{\rm Z}\kern-0.18em{\rm Z}}
\newcommand{\1}{{\rm 1}\kern-0.24em{\rm I}}
\newcommand{\N}{{\rm I}\kern-0.18em{\rm N}}
\newcommand{\SN}{\mathcal S_{[N]}}
\newcommand{\SSN}{\mathcal S_{[N]}^{++}}
\newcommand{\ELstar}{\E}
\newcommand{\PLstar}{\PP}
\newcommand{\SX}{\mathcal S_{\cX}}
\newcommand{\SSXs}{\mathcal S_{\cX}^{{\tiny +}}}
\newcommand{\SSX}{\mathcal S_{\cX}^{{\tiny ++}}}
\newcommand{\SLambda}{\mathcal S_{\cX}^{\Lambda}}
\newcommand{\DS}{\displaystyle}
\newcommand{\DPP}{\mathsf{DPP}}
\newcommand*\diff{\mathop{}\!\mathrm{d}}
\DeclareMathOperator{\Tr}{Tr}
\DeclareMathOperator{\Diag}{Diag}
\DeclareMathOperator{\diag}{diag}
\DeclareMathOperator{\Var}{Var}
\begin{document}

\begin{frontmatter}

\title{Rates of estimation for determinantal point processes\thanksref{t1}}
\runtitle{MLE of DPP{s}}

\begin{aug}

\author{\fnms{Victor-Emmanuel}~\snm{Brunel}\ead[label=veb]{vebrunel@math.mit.edu}},
\author{\fnms{Ankur}~\snm{Moitra}\thanksref{t3}\ead[label=ankur]{moitra@mit.edu}},
\author{\fnms{Philippe}~\snm{Rigollet}\thanksref{t2}\ead[label=rigollet]{rigollet@math.mit.edu}}
\and
\author{\fnms{John}~\snm{Urschel}\ead[label=urschel]{urschel@mit.edu}}

\affiliation{Massachusetts Institute of Technology}
\thankstext{t1}{Accepted for presentation at Conference on Learning Theory (COLT) 2017}
\thankstext{t2}{This work was supported in part by NSF CAREER DMS-1541099, NSF DMS-1541100, DARPA BAA-16-46 and a grant from the MIT NEC Corporation.}
\thankstext{t3}{This work was supported in part by NSF CAREER Award CCF-1453261, NSF Large CCF-1565235, a David and Lucile Packard Fellowship, an Alfred P. Sloan Fellowship, an Edmund F. Kelley Research Award, a Google Research Award and a grant from the MIT NEC Corporation.}

\address{{Victor-Emmanuel Brunel}\\
{Department of Mathematics} \\
{Massachusetts Institute of Technology}\\
{77 Massachusetts Avenue,}\\
{Cambridge, MA 02139-4307, USA}\\
\printead{veb}
}

\address{{Ankur Moitra}\\
{Department of Mathematics} \\
{Massachusetts Institute of Technology}\\
{77 Massachusetts Avenue,}\\
{Cambridge, MA 02139-4307, USA}\\
\printead{ankur}
}

\address{{Philippe Rigollet}\\
{Department of Mathematics} \\
{Massachusetts Institute of Technology}\\
{77 Massachusetts Avenue,}\\
{Cambridge, MA 02139-4307, USA}\\
\printead{rigollet}
}

\address{{John Urschel}\\
{Department of Mathematics} \\
{Massachusetts Institute of Technology}\\
{77 Massachusetts Avenue,}\\
{Cambridge, MA 02139-4307, USA}\\
\printead{urschel}
}

\runauthor{Brunel et al.}
\end{aug}

\begin{abstract}
Determinantal point processes (DPPs) have wide-ranging applications in machine learning, where they are used to enforce the notion of diversity in subset selection problems. Many estimators have been proposed, but surprisingly the basic properties of the maximum likelihood estimator (MLE) have received little attention. In this paper, we study the local geometry of the expected log-likelihood function to prove several rates of convergence for the MLE. We also give a complete characterization of the case where the MLE converges at a parametric rate. Even in the latter case, we also exhibit a potential curse of dimensionality where the asymptotic variance of the MLE is exponentially large in the dimension of the problem. 
\end{abstract}

\begin{keyword}[class=AMS]
\kwd[Primary ]{62F10}
\kwd[; secondary ]{60G55}
\end{keyword}
\begin{keyword}[class=KWD]
Determinantal point processes, statistical estimation, maximum likelihood, $L$-ensembles.
\end{keyword}


\end{frontmatter}

\section{Introduction} \label{SecIntro}

Determinantal point processes (DPPs) describe a family of repulsive point processes; they induce probability distributions that favor configurations of points that are far away from each other. DPPs are often split into two categories: discrete and continuous. In the former case, realizations of the DPP are vectors from the Boolean hypercube $\{0,1\}^N$, while in the latter, they occupy a continuous space such as $\R^d$. In both settings, the notion of distance can be understood in the sense of the natural metric with which the space is endowed. Such processes were formally introduced in the context of quantum mechanics to model systems of fermions (\cite{Mac75}) that were known to have a repulsive behavior, though DPPs have appeared implicitly in earlier work on random matrix theory, e.g.~\cite{Dys62}. Since then, they have played a central role in various corners of probability, algebra and combinatorics (\cite{BorOls00, BorSos03, Bor11, Oko01, OkoRes03}), for example, by allowing exact computations for integrable systems.

Following the seminal work of ~\cite{KulTas12}, both discrete and continuous DPPs have recently gained attention in the machine learning literature where the repulsive character of DPPs has been used to enforce the notion of diversity in subset selection problems. Such problems are pervasive in a variety of applications such as document or timeline summarization (\cite{LinBil12, YaoFanZha16}), image search (\cite{KulTas11,AffFoxAda14}), audio signal processing (\cite{XuOu16}), image segmentation (\cite{LeeChaYan16}), bioinformatics (\cite{BatQuoKul14}), neuroscience (\cite{SnoZemAda13}) and wireless or cellular networks modelization (\cite{MiyShi14, TorLeo14, LiBacDhi15, DenZhoHae15}). DPPs have also been employed as methodological tools in Bayesian and spatial statistics (\cite{KojKom16, BisCoe16}), survey sampling (\cite{LooMar15,ChaJolMar16}) and Monte Carlo methods (\cite{BarHar16}).

Even though most of the aforementioned applications necessitate estimation of the parameters of a DPP, statistical inference for DPPs has received little attention. In this context, maximum likelihood estimation is a natural method, but generally leads to a non-convex optimization problem. This problem has been addressed by various heuristics, including Expectation-Maximization (\cite{GilKulFox14}), MCMC (\cite{AffFoxAda14}), and fixed point algorithms (\cite{MarSra15}). None of these methods come with global guarantees, however. Another route used to overcome the computational issues associated with maximizing the likelihood of DPPs consists of imposing additional modeling constraints, initially in \cite{KulTas12, AffFoxAda14, BarTit15} and, more recently, \cite{DupBac16,GarPaqKoe16,GarPaqKoe16b,MarSra16}, in which assuming a specific low rank structure for the problem enabled the development of sublinear time algorithms.

The statistical properties of the maximum likelihood estimator  for such problems have received attention only in the continuous case and under strong parametric assumptions (\cite{LavMolRub15, BisLav16}) or smoothness assumptions in a nonparametric context (\cite{Bar13}). However, despite their acute relevance to machine learning and several algorithmic advances (see~\cite{MarSra15} and references therein), the statistical properties of general discrete DPPs have not been established. Qualitative and quantitative characterizations of the likelihood function would shed light on the convergence rate of the maximum likelihood estimator, as well as aid in the design of new estimators. 

In this paper, we take an information geometric approach to understand the asymptotic properties of the maximum likelihood estimator. First, we study the curvature of the expected log-likelihood around its maximum. Our main result is an exact characterization of when the maximum likelihood estimator converges at a parametric rate (Theorem~\ref{MainCor}). Moreover, we give quantitative bounds on the strong convexity constant (Proposition~\ref{PropositionPath}) that translate into lower bounds on the asymptotic variance and shed light on what combinatorial parameters of a DPP control said variance. 

The remainder of the paper is as follows. In Section~\ref{SEC:prelim}, we provide an introduction to DPPs together with notions and properties that are useful for our purposes. In Section~\ref{SEC:geometry}, we study the information landscape of DPPs and specifically, the local behavior of the expected log-likelihood around its global maxima. Finally, we translate these results into rates of convergence for maximum likelihood estimation in Section~\ref{SEC:stat}. Certain results and proofs are gathered in the appendices in order to facilitate the narrative.

\subsubsection*{Notation.}
Fix a positive integer $N$ and define $[N]=\{1,2,\ldots,N\}$. Throughout the paper, $\mathcal X$ denotes a subset of  $[N]$. We denote by $\wp(\cX)$ the power set of $\cX$.

We implicitly identify the set of $|\cX| \times |\cX|$ matrices to the the set of mappings from $\cX \times \cX$ to $\R$. As a result,  we denote by $I_\mathcal X$ the identity matrix in $\R^{\mathcal X\times\mathcal X}$ and we omit the subscript whenever $\mathcal X=[N]$. For a matrix $A\in \R^{\mathcal X\times\mathcal X}$ and $J \subseteq\cX$, denote by $A_J$ the restriction of $A$ to $J \times J$. When defined over $\cX \times \cX$, $A_J$ maps elements outside of $J \times J$ to zero.

Let $\SX$ denote the set of symmetric matrices in $\R^{\mathcal X\times\mathcal X}$ matrices and  denote by $\SLambda$ the subset of matrices in $\SX$ that have eigenvalues in $\Lambda \subseteq \R$. Of particular interest are $\SSXs=\cS_{\cX}^{[0, \infty)}$ and $\SSX=\cS_{\cX}^{(0, \infty)}$, the subsets of positive semidefinite and positive definite matrices respectively.

For a matrix $A \in \R^{\cX \times \cX}$, we denote by $\|A\|_F$, $\det(A)$ and $\Tr(A)$ its Frobenius norm, determinant and trace respectively. We set $\det A_\emptyset=1$ and $\Tr A_\emptyset=0$. Moreover, we denote by $\diag(A)$ the vector of size $|\cX|$ with entries given by the diagonal elements of $A$. If $x \in \R^N$, we denote by $\Diag(x)$ the $N \times N$ diagonal matrix with diagonal given by $x$. 

For $\cA \subseteq \SX$, $k\geq 1$ and a smooth function $f:\mathcal A\to \R$, we denote by $\diff^k f(A)$ the $k$-th derivative of $f$ evaluated at $A\in\mathcal A$. This is a $k$-linear map defined on $\cA$; for $k=1$, $\diff f(A)$ is the gradient of $f$, $\diff^2 f(A)$ the Hessian, etc.

Throughout this paper, we say that a matrix $A \in \SX$ is block diagonal if there exists a partition $\{J_1, \ldots, J_k\}$, $k \ge 1$, of $\cX$ such that $A_{ij}=0$ if $i \in J_a, j \in J_b$ and $a \neq b$. The largest number $k$ such that such a representation exists is called the \emph{number of blocks} of $A$ and in this case $J_1, \ldots, J_k$ are called \emph{blocks} of $A$.

\section{Determinantal point processes and $L$-ensembles}
\label{SEC:prelim}

In this section we gather definitions and useful properties, old and new, about determinantal point processes.

A (discrete) \emph{determinantal point process} (DPP) on $\cX$ is a random variable  $Z \in \wp(\cX)$ with distribution 
\begin{equation} \label{DefDPP}
	\PP[J \subseteq Z]=\det(K_J), \hspace{3mm} \forall \,J\subseteq \mathcal X,
\end{equation}
where $K \in \cS_{\cX}^{[0,1]}$, is called the \emph{correlation kernel} of $Z$. 

If it holds further that $K \in  \cS_{\cX}^{[0,1)}$, then $Z$ is called \emph{$L$-ensemble}  and there exists a matrix $L=K(I-K)^{-1} \in  \cS_{\cX}^{+}$ such that
\begin{equation} \label{DefLEnsemble}
	\PP[Z=J]=\frac{\det(L_J)}{\det(I+L)}, \hspace{3mm} \forall\, J \subseteq \mathcal X,
\end{equation}
Using the multilinearity of the determinant, it is easy to see that   \eqref{DefLEnsemble} defines a probability distribution (see Lemma \ref{keyidentity}). We call $L$ the \textit{kernel} of the $L$-ensemble~$Z$. 

Using the inclusion-exclusion principle, it follows from \eqref{DefDPP} that $\PP(Z=\emptyset)=\det(I-K)$. Hence, a DPP $Z$ with correlation kernel $K$ is an $L$-ensemble if and only if $Z$ can be empty with positive probability.

In this work, we only consider DPPs that are $L$-ensembles. In that setup, we can identify $L$-ensembles and DPPs, and the kernel $L$ and correlation kernel $K$ are related by the identities 
\begin{equation} \label{KtoL}
	L=K(I-K)^{-1}\,,
\qquad 
	K=L(I+L)^{-1}.
\end{equation}

In the rest of this work, we only consider kernels $L$ that are positive definite, namely $L \in \cS_{\cX}^{++}$. We denote by $\textsf{DPP}_{\mathcal X}(L)$ the probability distribution associated with the DPP with kernel $L$ and  refer to $L$ as the \emph{parameter} of the DPP in the context of statistical estimation. If $\mathcal X=[N]$, we drop the subscript and only write $\textsf{DPP}(L)$ for a DPP with kernel $L$ on $[N]$.

The probability mass function~\eqref{DefLEnsemble} of $\textsf{DPP}(L)$ depends only on the principal minors of $L$ and on $\det(I+L)$. In particular, $L$ is not fully identified by $\textsf{DPP}(L)$ and the lack of identifiability of $L$ has been characterized exactly \cite[Theorem 4.1]{Kul12}. Denote by $\mathcal D$ the collection of $N\times N$ diagonal matrices with $\pm 1$ diagonal entries. Then, for $L_1,L_2\in\SSN$, 
\begin{equation} \label{Identifiability} 
	\textsf{DPP}(L_1)=\textsf{DPP}(L_2) \iff \exists D\in\mathcal D, L_2=DL_1D.
\end{equation}
Hence, if $L\in \SSN$, then $\DS \{M\in\SSN:\textsf{DPP}(M)=\textsf{DPP}(L)\}=\{DLD; D\in\mathcal D\}$. It is easy to see that the cardinal of this family is always of the form $2^{N-k}$, for some $k\in\{1,\ldots,N\}$. If $L$ is block diagonal (see Section \ref{SecIntro} for the definition), then $k$ is the number of blocks of $L$. Otherwise, $k=1$ and we say that $L$ is \textit{irreducible}. A DPP with kernel $L$ is called irreducible whenever $L$ is. Next we define a graph associated to $L$ that naturally describes its block structure.

\begin{definition}
Fix $\cX \subseteq[N]$. The \emph{determinantal graph} $\mathcal G_L=(\cX, E_L)$ of a DPP with kernel $L \in \SSX$ is the undirected graph with vertices $\cX$ and edge set $E_L=\big\{\{i,j\}\,:\, L_{i,j}\neq 0\big\}$. If $i,j\in \mathcal X$, write $i\sim_L j$ if there exists a path in $\mathcal G_L$ that connects $i$ and $j$. 
\end{definition}

It is not hard to see that a DPP with kernel $L$ is irreducible if and only if its determinantal graph $\cG_L$ is connected. If $L$ is block diagonal, then its blocks correspond to the connected components of $\cG_L$. Moreover, it follows directly from~(\ref{DefLEnsemble}) that if $Z\sim \DPP(L)$ and $L$ has blocks $J_1, \ldots, J_k$, then $Z\cap J_1, \ldots, Z\cap J_k$ are mutually independent DPPs with kernels $L_{J_1}, \ldots, L_{J_k}$ respectively.

Now that we have reviewed useful properties of DPPs, we are in a position to study the information landscape for the statistical problem of estimating the kernel of a DPP from independent observations.

\section{Information geometry}
\label{SEC:geometry}
\subsection{Definitions}

Our goal is to estimate an unknown kernel $L^* \in \SSN$ from $n$ independent copies of $Z \sim \DPP(L^*)$. In this paper, we study the statistical properties of what is arguably the most natural estimation technique: maximum likelihood estimation.

Let $Z_1,\ldots, Z_n$ be $n$ independent copies of $Z\sim \DPP(L^*)$ for some unknown $L^*\in\SSN$.  The (scaled) log-likelihood associated to this model is given for any $L\in\SSN$,
\begin{equation}
\label{EmpLogLike}
\hat\Phi(L)  = \frac{1}{n}\sum_{i=1}^n \log p_{Z_i}(L)  = \sum_{J\subseteq [N]} \hat p_J\log\det(L_J) - \log\det(I+L)\,,
\end{equation}
where $p_J(L)=\PP[Z=J]$ is defined in~\eqref{DefLEnsemble} and $\hat p_J$ is its empirical counterpart defined by
$$
\hat p_J=\frac{1}{n} \sum_{i=1}^n \1(Z_i=J)\,.
$$
Here $\1(\cdot)$ denotes the indicator function. We denote by $\Phi_{L^*}$ the expected log-likelihood as a function of $L$ (resp. $K$):
\begin{equation} \label{ExpLogLike}
	\Phi_{L^*}(L) = \sum_{J\subseteq [N]} p_J(L^*)\log\det(L_J) - \log\det(I+L).
	\end{equation}

For the ease of notation, we assume in the sequel that $L^*$ is fixed, and write simply $\Phi=\Phi_{L^*}$ and $p_J^*=p_J(L^*)$, for $J\subseteq [N]$.

We now proceed to local study of the function $L \mapsto \Phi(L)$ around $L=L^*$ and show, in turn how this analysis can be turned into rates of estimation using rather standard statistical arguments. Specifically, we give a necessary and sufficient condition on $L^*$ so that $\Phi$ is locally strongly concave around $L=L^*$, i.e., the Hessian of $\Phi$ evaluated at $L=L^*$ is definite negative. 

\subsection{Global maxima}

Note that $\Phi(L)$ is, up to an additive constant that does not depend on $L$, the Kullback-Leibler (KL) divergence between $\DPP(L)$ and $\DPP(L^*)$: 

$$
	\Phi(L) = \Phi(L^*) - \textsf{KL}\left(\textsf{DPP}(L^*),\textsf{DPP}(L)\right), \forall L\in\SSN\,,
$$
where $\textsf{KL}$ stands for the Kullback-Leibler divergence between probability measures. In particular, by the properties of this divergence, $\Phi(L)\leq \Phi(L^*)$ for all $L\in\SSN$, and
$$
	\Phi(L)=\Phi(L^*) 
	\iff \textsf{DPP}(L)=\textsf{DPP}(L^*) 
	 \iff L=DL^*D, \hspace{3mm} \mbox{for some } D\in\mathcal D.
$$
As a consequence, the global maxima of $\Phi$ are exactly the matrices $DL^*D$, for $D$ ranging in $\mathcal D$. The following theorem gives a more precise description of $\Phi$ around $L^*$ (and, equivalently, around each $DL^*D$ for $D\in\mathcal D$).

\begin{theorem} \label{MainThm}

Let $L^*\in\SSN$, $Z\sim\textsf{DPP}(L^*)$ and $\Phi=\Phi_{L^*}$, as defined in \eqref{ExpLogLike}. Then, $L^*$ is a critical point of $\Phi$. Moreover, for any $H \in \SN$, 
\begin{equation*}
\diff^2\Phi(L^*)(H,H)=-\Var[ \Tr((L_Z^*)^{-1} H_Z)].
\end{equation*} 

In particular, the Hessian $\diff^2\Phi(L^*)$ is negative semidefinite.
\end{theorem}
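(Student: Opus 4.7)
The plan is to treat this as a standard information-geometric computation and exploit the classical information matrix identity $\E_{\theta^*}[d^2 \log p_Z(\theta^*)] = -\E_{\theta^*}[(d\log p_Z(\theta^*))^2]$, applied to the DPP parametric family. The log-density admits the clean decomposition $\log p_Z(L) = \log\det(L_Z) - \log\det(I+L)$ (using the convention $\det L_\emptyset = 1$), so everything reduces to Jacobi's formula for the derivative of $\log\det$.

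First I would compute the score. Since the entries of $L$ outside of $Z\times Z$ do not affect $\log\det(L_Z)$, Jacobi's formula yields
\begin{equation*}
d_L \log p_Z(L)(H) = \Tr\!\bigl((L_Z)^{-1} H_Z\bigr) - \Tr\!\bigl((I+L)^{-1} H\bigr),
\end{equation*}
with the convention that the first trace is zero when $Z=\emptyset$. Taking expectations under $Z\sim\DPP(L^*)$ gives $d\Phi(L^*)(H)$, and the usual score identity (the interchange $\sum_J \partial_L p_J(L^*) = \partial_L \sum_J p_J(L^*) = 0$, which is legal because the sum is finite over $\wp([N])$) shows this vanishes for every $H\in\SN$. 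Hence $L^*$ is a critical point.

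Next I would differentiate once more. Using $d(L_J^{-1}) = -L_J^{-1} H_J L_J^{-1}$ on the restricted block and the analogous identity for $(I+L)^{-1}$, the Hessian of $\log p_Z$ at $L=L^*$ in the direction $(H,H)$ equals
\begin{equation*}
d^2 \log p_Z(L^*)(H,H) = -\Tr\!\bigl((L_Z^*)^{-1} H_Z (L_Z^*)^{-1} H_Z\bigr) + \Tr\!\bigl((I+L^*)^{-1} H (I+L^*)^{-1} H\bigr).
\end{equation*}
Now I would invoke the information matrix equality: differentiating the identity $\E_{L^*}[d \log p_Z(L^*)(H)] = 0$ once more under the sum (again legal, being a finite sum) yields
\begin{equation*}
\E_{L^*}\!\bigl[d^2 \log p_Z(L^*)(H,H)\bigr] = -\E_{L^*}\!\bigl[\bigl(d \log p_Z(L^*)(H)\bigr)^2\bigr].
\end{equation*}
Since the term $\Tr((I+L^*)^{-1} H)$ in the score is deterministic, it contributes only to the mean, and because the score has mean zero, $\E_{L^*}[(d\log p_Z(L^*)(H))^2] = \Var[\Tr((L_Z^*)^{-1} H_Z)]$. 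Combining with $d^2\Phi(L^*)(H,H) = \E_{L^*}[d^2 \log p_Z(L^*)(H,H)]$ (interchange of expectation and differentiation on the finite sum) gives the stated formula, and negative semidefiniteness of the Hessian follows since a variance is nonnegative.

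The only potential obstacle is the bookkeeping around the empty set and around blocks $J$ where $L_J$ could be singular, but because $L^*\in\SSN$ is strictly positive definite every principal submatrix $L_J^*$ is invertible, and the empty-set case is absorbed by the conventions $\det L_\emptyset = 1$ and $\Tr L_\emptyset = 0$. Everything else is a formal, coordinate-free differentiation on the open cone $\SSN$, so no deeper input is needed.
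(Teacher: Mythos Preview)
Your proof is correct and matches the paper's approach: the paper derives the result from Lemma~\ref{Derivatives} together with the identities \eqref{keyEq1} and \eqref{keyEq2}, which are precisely the first and second Bartlett identities (mean-zero score and information matrix equality) written out algebraically by differentiating $\det(I+L)=\sum_J\det(L_J)$. The paper itself remarks, right after the theorem, that the second part ``follows from the usual fact that the Fisher information matrix has two expressions,'' which is exactly your argument.
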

\begin{proof}
Theorem \ref{MainThm} is a direct consequence of Lemma \ref{Derivatives} and identities \eqref{keyEq1} and \eqref{keyEq2}. 
\end{proof}

The first part of this theorem is a consequence of the facts that $L^*$ is a global maximum of a smooth $\Phi$ over the open parameter space $\SSN$. The second part of this theorem follows from the usual fact that the Fisher information matrix has two expressions: the opposite of the Hessian of the expected log-likelihood and the variance of the score (derivative of the expected log-likelihood). We also provide a purely algebraic proof of Theorem \ref{MainThm} in the appendix.

Our next result characterizes the null space of $d^2\Phi(L^*)$ in terms of the determinantal graph $\mathcal G_{L^*}$.

\begin{theorem} \label{MainCor}

Under the same assumptions of Theorem \ref{MainThm}, the null space of the quadratic Hessian map $H\in\SN \mapsto \diff^2\Phi(L^*)(H,H)$ is given by 
\begin{equation}
\label{EQ:defNL}
\mathcal N(L^*)=\left\{H\in\SN\,:\,H_{i,j}=0\ \text{for all } i,j \in [N] \ \text{such that}\ \ i\sim_{L^*}j\right\}\,.
\end{equation}
In particular, $\diff^2\Phi(L^*)$ is negative definite if and only if $L^*$ is irreducible.
\end{theorem}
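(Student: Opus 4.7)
My plan is to reduce the statement to a combinatorial characterization of $H$ and then exploit the determinantal graph via an induction on graph distance.

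First, by Theorem~\ref{MainThm}, $H$ belongs to the null space of the quadratic Hessian iff $\Var[\Tr((L^*_Z)^{-1}H_Z)]=0$. Since $L^*\in\SSN$ is positive definite, $p^*_J>0$ for every $J\subseteq[N]$, so $Z$ has full support on $\wp([N])$; the variance vanishes iff $T(J):=\Tr((L^*_J)^{-1}H_J)$ is constant in $J$, and evaluating at $J=\emptyset$ forces that constant to equal $0$. Hence the null space equals $\{H\in\SN:T(J)=0\text{ for all }J\subseteq[N]\}$.

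For the inclusion $\mathcal N(L^*)\subseteq$ null space, I would use that $L^*$ is block diagonal with respect to the connected components $J_1,\ldots,J_k$ of $\mathcal G_{L^*}$ (entries across components vanish by definition of the edge set). Then $L^*_J$, and hence $(L^*_J)^{-1}$, decomposes as a block diagonal matrix indexed by $J\cap J_1,\ldots,J\cap J_k$, and if every diagonal block $H_{J\cap J_a}$ of $H_J$ vanishes, which is precisely the assumption $H\in\mathcal N(L^*)$, then $T(J)=\sum_{a=1}^k\Tr((L^*_{J\cap J_a})^{-1}H_{J\cap J_a})=0$.

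The reverse inclusion is the substantive direction. I would induct on the graph distance $p:=d_{\mathcal G_{L^*}}(i,j)$, proving $H_{ij}=0$ whenever $i\sim_{L^*}j$. The cases $p=0$ (take $J=\{i\}$, so $T(J)=H_{ii}/L^*_{ii}$) and $p=1$ (take $J=\{i,j\}$, noting $L^*_{ij}\neq 0$) are immediate. For $p\geq 2$, fix a shortest path $i=i_0,i_1,\ldots,i_p=j$ in $\mathcal G_{L^*}$ and set $J=\{i_0,\ldots,i_p\}$. Minimality forces $L^*_{i_\alpha i_\beta}=0$ whenever $|\alpha-\beta|\geq 2$ (any such chord would shortcut the path), so $L^*_J$ is tridiagonal when indexed along the path. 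Simultaneously, the inductive hypothesis kills $H_{i_\alpha i_\beta}$ for every pair $(\alpha,\beta)\neq(0,p)$, since those pairs are at graph distance strictly less than $p$. Consequently $H_J$ has only two nonzero entries, and $T(J)=2(L^*_J)^{-1}_{1,p+1}\,H_{i_0 i_p}$.

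The key step, which I expect to be the main obstacle, is showing $(L^*_J)^{-1}_{1,p+1}\neq 0$. By Cramer's rule this entry is a nonzero scalar times the determinant of the minor of $L^*_J$ obtained by deleting the last row and the first column. By the tridiagonal structure, that minor is lower triangular with diagonal entries $L^*_{i_{a-1},i_a}$ for $a=1,\ldots,p$, so its determinant equals $\prod_{a=1}^p L^*_{i_{a-1},i_a}\neq 0$ (each factor is a nonzero edge weight), while $\det(L^*_J)>0$ by positive definiteness. Thus $T(J)=0$ forces $H_{i_0 i_p}=0$, completing the induction. Finally, $\mathcal N(L^*)=\{0\}$ iff every pair $(i,j)$ satisfies $i\sim_{L^*}j$, iff $\mathcal G_{L^*}$ is connected, iff $L^*$ is irreducible; combined with the negative semidefiniteness from Theorem~\ref{MainThm}, this gives the claimed negative-definiteness characterization.
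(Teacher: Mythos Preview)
Your proof is correct and follows essentially the same route as the paper's: both reduce the null-space condition to $T(J)=\Tr((L^*_J)^{-1}H_J)=0$ for all $J$, and both establish the substantive inclusion by strong induction on the graph distance $d_{\mathcal G_{L^*}}(i,j)$, choosing $J$ to be the vertex set of a shortest path so that $L^*_J$ is tridiagonal and the relevant inverse entry is a product of edge weights divided by $\det L^*_J$.

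The one genuine difference is in the converse direction $\mathcal N(L^*)\subseteq\text{null space}$. The paper decomposes $H$ via an auxiliary lemma into pieces $H^{(j)}$ satisfying $D^{(j)}H^{(j)}D^{(j)}=-H^{(j)}$ for sign matrices $D^{(j)}$ commuting with $L^*$, and then uses a conjugation trick to force each trace to vanish. Your argument is more direct: since $(L^*_J)^{-1}$ inherits the block structure of $L^*$ restricted to $J$, the trace $T(J)$ only sees the diagonal blocks $H_{J\cap J_a}$, which vanish by hypothesis. This is shorter and avoids the auxiliary lemma entirely; the paper's decomposition, on the other hand, also yields $\Tr((I+L^*)^{-1}H)=0$ as a byproduct, which it reuses later in the analysis of the fourth derivative.
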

\begin{proof}

Let $H\in\SN$ be in the null space of $\diff^2\Phi(L^*)$, i.e., satisfy $\diff^2\Phi(L^*)(H,H)=0$. We need to prove that $H_{i,j}=0$ for all pairs $i,j\in [N]$ such that $i\sim_{L^*}j$. To that end, we proceed by (strong) induction on the distance between $i$ and $j$ in $\mathcal G_{L^*}$, i.e., the length of the shortest path from $i$ to $j$ (equal to $\infty$ if there is no such path). Denote this distance by $d(i,j)$.

First, by Theorem \ref{MainThm}, $\Var[ \Tr((L_Z^*)^{-1} H_Z)]=0$ so the random variable $\Tr((L_Z^*)^{-1} H_Z)$ takes only one value with probability one. Therefore since $p_J^*>0$ for all $J \subseteq[N]$ and $\Tr((L_\emptyset^*)^{-1} H_\emptyset) = 0$,   we also have
\begin{equation} \label{ConstTrace}
\Tr([L^*_J]^{-1} H_J) = 0, \quad \forall J \subseteq[N].
\end{equation}

We now proceed to the induction.

If $d(i,j)=0$, then $i=j$ and since $L^*$ is definite positive, $L_{i,i}^*\neq 0$. Thus, using \eqref{ConstTrace} with $J=\{i\}$, we get $H_{i,i}=0$. 

If $d(i,j)=1$, then $L_{i,j}^*\neq 0$, yielding $H_{i,j}=0$, using again \eqref{ConstTrace}, with $J=\{i,j\}$ and the fact that $H_{i,i}=H_{j,j}=0$, established above.

Let now $m\geq 2$ be an integer and assume that for all pairs $(i,j)\in [N]^2$ satisfying $d(i,j) \leq m$, $H_{i,j} = 0$.
Let $i,j\in [N]$ be a pair satisfying $d(i,j) = m+1$. Let $(i,k_1,\ldots,k_m,j)$ be a shortest path from $i$ to $j$ in $\mathcal G_{L^*}$ and let $J=\{k_0,k_1,\ldots,k_m,k_{m+1}\}$, where $k_0=i$ and $k_{m+1}=j$. Note that the graph $\mathcal G_{L_J^*}$ induced by $L_J^*$ is a path graph and that for all $s,t=0,\ldots,m+1$ satisfying $|s-t|\leq m$, $d(k_s,k_t)=|s-t|\leq m$, yielding $H_{k_s,k_t}=0$ by induction. Hence,
\begin{equation} \label{Nolabel1354}
	\Tr\left((L^*_J)^{-1} H_J\right) = 2\left((L^*_J)^{-1}\right)_{i,j}H_{i,j}=0,
\end{equation}
by \eqref{ConstTrace} with $J=\{i,j\}$. Let us show that $\left((L^*_J)^{-1}\right)_{i,j}\neq 0$, which will imply that $H_{i,j}=0$.
By writing $(L^*_J)^{-1}$ as the ratio between the adjugate of $L_J^*$ and its determinant, we have
\begin{equation} \label{ratioDets}
	\left | \left((L^*_J)^{-1}\right)_{i,j} \right |= \left | \frac{\det L^*_{J\setminus\{i\},J\setminus\{j\}}}{\det L^*_J} \right |,
\end{equation}
where $L^*_{J\setminus\{i\},J\setminus\{j\}}$ is the submatrix of $L^*_J$ obtained by deleting the $i$-th line and $j$-th column. The determinant of this matrix can be expanded as
\begin{align}
\label{FormDet}	\det L_{J\setminus\{i\},J\setminus\{j\}}^* & = \sum_{\sigma\in\mathcal M_{i,j}}\varepsilon(\sigma)L_{i,\sigma(i)}^*L_{k_1,\sigma(k_1)}^*\ldots L_{k_m,\sigma(k_m)}^*\,,
	\end{align}
where $\mathcal M_{i,j}$ stands for the collection of all one-to-one maps from $J\setminus\{j\}$ to $J\setminus\{i\}$ and, for any such map $\sigma$, $\varepsilon(\sigma)\in\{-1,1\}$. There is only one term in \eqref{FormDet} that is nonzero: Let $\sigma\in\mathcal M_{i,j}$ for which the product in \eqref{FormDet} is nonzero. Recall that the graph induced by $L_J^*$ is a path graph. Since $\sigma(i)\in J\setminus\{i\}$, $L_{i,\sigma(i)}^*=0$ unless $\sigma(i)=k_1$. Then, $L_{k_1,\sigma(k_1)}^*$ is nonzero unless $\sigma(k_1)=k_1$ or $k_2$. Since we already have $\sigma(i)=k_1$ and $\sigma$ is one-to-one, $\sigma(k_1)=k_2$. By induction, we show that $\sigma(k_s)=k_{s+1}$, for $s=1,\ldots,m-1$ and $\sigma(k_m)=j$. As a consequence, $\det L_{J\setminus\{i\},J\setminus\{j\}}^*\neq 0$ and, by \eqref{Nolabel1354} and \eqref{ratioDets}, $H_{i,j}=0$, which we wanted to prove. 

Hence, by induction, we have shown that if $\diff^2\Phi(L^*)(H,H)=0$, then for any pair $i,j\in[N]$ such that $d(i,j)$ is finite, i.e., with $i\sim_{L^*} j$, $H_{i,j}=0$. \vspace{3mm} 

Let us now prove the converse statement: Let $H\in\SN$ satisfy $H_{i,j}=0$, for all $i,j$ with $i\sim_{L^*} j$. First, using Lemma \ref{SpanNullSpace} with its notation, for any $J\subseteq [N]$ and $j=1,\ldots,k$,
\begin{align*}
	D_J^{(j)}(L_J^*)^{-1}D_J^{(j)} & = \left(D_J^{(j)}L_J^*D_J^{(j)}\right)^{-1}  = (L_J^*)^{-1}
\end{align*}
and 
\begin{align*}
	D_J^{(j)}H_J^{(j)}D_J^{(j)}  = -H_J^{(j)}\,.
\end{align*}

Hence,
\begin{align*}
	\Tr\left((L_J^*)^{-1}H^{(j)}_J\right) & = \Tr\left(D^{(j)}(L_J^*)^{-1}D^{(j)}H^{(j)}_J\right) = - \Tr\left((L_J^*)^{-1}H_J^{(j)}\right)=0\,.
\end{align*}
Summing over $j=1,\ldots,k$ yields 
\begin{equation} \label{NullTraceJ}
	\Tr\left((L_J^*)^{-1}H_J\right)=0.
\end{equation}
In a similar fashion,
\begin{equation} \label{NullTrace}
	\Tr\left((I+L^*)^{-1}H\right)=0.
\end{equation}

Hence, using \eqref{keyEq2},
\begin{align*}
	\diff^2\Phi(L^*)(H,H) & = -\sum_{J\subseteq [N]}p_J^*\Tr^2\left((L_J^*)^{-1}H_J\right)+\Tr^2\left((I+L^*)^{-1}H\right)  = 0,
\end{align*}
which ends the proof of the theorem.     
\end{proof}

It follows from Theorem~\ref{MainCor} that  $\Phi_{L^*}$ is locally strongly concave around $L^*$ if and only if $L^*$ is irreducible since, in that case, the smallest eigenvalue of $-\diff^2\Phi(L^*)$ is positive. Nevertheless, this positive eigenvalue may be exponentially small in $N$, leading to a small curvature around the maximum of $\Phi_{L^*}$. This phenomenon is illustrated by the following example.

Consider the tridiagonal matrix $L^*$ given by:
$$L_{i,j}^* = \begin{cases}
		a \mbox{ if } i=j,\\
		b \mbox{ if } |i-j|=1, \\
		0 \mbox{ otherwise,}
\end{cases}
	$$	
where $a$ and $b$ are real numbers. 

\begin{proposition} \label{PropositionPath}
Assume that $a>0$ and $a^2>4 b^2$. Then, $L^*\in\SSN$ and there exist two positive numbers $c_1$ and $c_2$ that depend only on $a$ and $b$ such that
\begin{equation*}
	0 <  \inf_{H\in\SN:\|H\|_F=1}-\diff^2\Phi(L^*)(H,H)\leq c_1e^{-c_2N}.
\end{equation*}
\end{proposition}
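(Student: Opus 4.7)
The plan proceeds in three steps. For the positivity of $L^*$, observe that $L^*$ is a symmetric tridiagonal Toeplitz matrix whose eigenvalues admit the closed form $a + 2b\cos(k\pi/(N+1))$, $k=1,\ldots,N$; under $a>0$ and $a^2>4b^2$, its smallest eigenvalue is bounded below by $a-2|b|>0$, so $L^*\in\SSN$. Assuming $b\neq 0$ (otherwise $L^*=aI$ is reducible and the stated lower bound fails), the determinantal graph $\cG_{L^*}$ is a path on $N$ vertices, hence connected, so $L^*$ is irreducible and Theorem~\ref{MainCor} delivers the strict positivity $0<\inf_{\|H\|_F=1}-\diff^2\Phi(L^*)(H,H)$.

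For the exponential upper bound, I would test the quadratic form on the rank-two matrix $H=\tfrac{1}{\sqrt 2}(e_1 e_N^\top+e_N e_1^\top)$, which has $\|H\|_F=1$. Theorem~\ref{MainThm} gives $-\diff^2\Phi(L^*)(H,H)=\Var[\Tr((L^*_Z)^{-1}H_Z)]$. The decisive combinatorial observation is that the summand $\Tr((L^*_J)^{-1}H_J)$ vanishes for every $J\neq[N]$. Indeed, if $\{1,N\}\not\subseteq J$ then $H_J\equiv 0$; and if $\{1,N\}\subseteq J$ but some intermediate index $k\in\{2,\ldots,N-1\}$ is missing from $J$, then $L^*_J$ is block diagonal with $1$ and $N$ lying in distinct blocks (since $L^*_{i,j}=0$ whenever $|i-j|\geq 2$), so $((L^*_J)^{-1})_{1,N}=0$. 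Hence $\Tr((L^*_Z)^{-1}H_Z)$ is a rescaled Bernoulli variable taking value $\sqrt{2}((L^*)^{-1})_{1,N}$ with probability $p^*_{[N]}$ and zero otherwise, and $\Var[\Tr((L^*_Z)^{-1}H_Z)]\leq 2\,((L^*)^{-1})_{1,N}^2$.

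It therefore remains to bound the corner entry. By Cramer's rule, $((L^*)^{-1})_{1,N}=(-1)^{N+1}b^{N-1}/\det(L^*)$, since removing row $N$ and column $1$ of $L^*$ leaves a lower-triangular matrix with $b$ on its diagonal. Writing $D_N=\det(L^*)$, the recurrence $D_N=aD_{N-1}-b^2D_{N-2}$ with $D_0=1,D_1=a$ yields $D_N=(\lambda_+^{N+1}-\lambda_-^{N+1})/(\lambda_+-\lambda_-)$, where $\lambda_\pm=\tfrac{1}{2}(a\pm\sqrt{a^2-4b^2})$ are both positive and distinct. Using $\lambda_+\lambda_-=b^2$ and the elementary lower bound $D_N\geq \lambda_+^N$, one obtains $((L^*)^{-1})_{1,N}^2\leq \lambda_+^{-2}(\lambda_-/\lambda_+)^{N-1}$, and choosing $c_2=\log(\lambda_+/\lambda_-)>0$ produces the claimed exponential bound. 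The main obstacle is the vanishing argument in the second paragraph, which is what makes the single test matrix $H$ concentrate all of its variance mass on one extraordinarily improbable configuration; the remaining ingredients are standard facts about tridiagonal Toeplitz matrices and linear recurrences.
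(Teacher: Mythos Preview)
Your proposal is correct and follows essentially the same strategy as the paper: test the quadratic form on the corner matrix $H$ supported at $(1,N)$ and $(N,1)$, use the path structure of $\cG_{L^*}$ to show that $\Tr((L^*_J)^{-1}H_J)$ vanishes for every $J\neq[N]$, and then bound the corner entry $((L^*)^{-1})_{1,N}$ via Cramer's rule and the tridiagonal determinant recurrence. The only cosmetic differences are that you establish $L^*\in\SSN$ from the explicit Toeplitz eigenvalue formula (the paper uses Sylvester's criterion via the recurrence), you work with the exact closed form $D_N=(\lambda_+^{N+1}-\lambda_-^{N+1})/(\lambda_+-\lambda_-)$ rather than a generic lower bound $\mu\lambda_+^k$, and you make the $b\neq 0$ caveat explicit.
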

\begin{proof}
Consider the matrix $H \in \SN$ with zeros everywhere but in positions $(1,N)$ and $(N,1)$, where its entries are $1$. Note that $\displaystyle{\Tr\left((L_J^*)^{-1}H_J\right)}$ is zero for all $J\subseteq [N]$ such that $J\neq [N]$. This is trivial if $J$ does not contain both $1$ and $N$, since $H_J$ will be the zero matrix. If $J$ contains both $1$ and $N$ but does not contain the whole path that connects them in $\mathcal G_{L^*}$, i.e., if $J$ does not contain the whole space $[N]$, then the subgraph $\mathcal G_{L_J^*}$ has at least two connected components, one containing $1$ and another containing $N$. Hence, $L_J^*$ is block diagonal, with $1$ and $N$ being in different blocks. Therefore, so is $(L_J^*)^{-1}$ and $\displaystyle{\Tr\left((L_J^*)^{-1}H_J\right)=2\left((L_J^*)^{-1}\right)_{1,N}=0}$.

Now, let $J=[N]$. Then, 
\begin{align}
	\Tr\left((L_J^*)^{-1}H_J\right) & = 2\left((L^*)^{-1}\right)_{1,N} \nonumber \\
	& = 2(-1)^{N+1}\frac{\det(L_{[N]\setminus\{1\},[N]\setminus\{N\}}^*)}{\det L^*} \nonumber \\
	\label{Path1} & = 2(-1)^{N+1}\frac{b^{N-1}}{\det L^*}.
\end{align}
Write $\det L^*=u_N$ and observe that 
\begin{equation*}
	u_k=au_{k-1}+b^2u_{k-2}, \hspace{3mm} \forall k\geq 2
\end{equation*}
and $u_1=a, u_2=a^2-b^2$. Since $a^2>4b^2$, there exists $\mu>0$ such that 
\begin{equation} \label{Toplitz}
	u_k\geq \mu\left(\frac{a+\sqrt{a^2-4b^2}}{2}\right)^k, \hspace{3mm} \forall k\geq 1.
\end{equation}
Hence, \eqref{Path1} yields
\begin{equation*}
	\left|\Tr\left((L_J^*)^{-1}H_J\right)\right| \leq \frac{2}{\mu|b|}\left(\frac{2|b|}{a+\sqrt{a^2-4b^2}}\right)^N,
\end{equation*}
which proves the second part of Proposition \ref{PropositionPath}, since $a+\sqrt{a^2-4b^2}>a>2|b|$. 

Finally note that \eqref{Toplitz} implies that all the principal minors of $L^*$ are positive, hence $L^* \in \SSN$.     
\end{proof}

While the Hessian cancels in some directions $H \in \cN(L^*)$ for any reducible $L^* \in \SSN$, the next theorem shows that the fourth derivative is negative in \emph{any} nonzero direction $H \in \cN(L^*)$ so that $\Phi$ is actually curved around $L^*$ in any direction.


\begin{theorem} \label{FourthOrder}

Let $H\in\mathcal N(L^*)$. Then,
\begin{itemize}
	\item[(i)] $\displaystyle{\diff^3\Phi(L^*)(H,H,H)=0}$;
	\item[(ii)] $\displaystyle{\diff^4\Phi(L^*)(H,H,H,H)=-\frac{2}{3}\Var\left[\Tr\left(((L_Z^*)^{-1}H_Z)^2\right)\right]}\leq 0$;
	\item[(iii)] $\displaystyle{\diff^4\Phi(L^*)(H,H,H,H)=0 \iff H=0}$.
\end{itemize}

\end{theorem}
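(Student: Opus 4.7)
My plan is to combine three ingredients: the pointwise vanishing of the directional score established in the proof of Theorem~\ref{MainCor}; the Bartlett-type identities obtained by differentiating $\sum_J p_J(L)\equiv 1$ repeatedly at $L^*$; and the standard power-series expansion of $\log\det$.

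\textbf{Step 1 (setup).} Fix $H\in\mathcal{N}(L^*)$. The argument inside the proof of Theorem~\ref{MainCor} actually shows the stronger statement that $\Tr((L_J^*)^{-1}H_J)=0$ for every $J\subseteq[N]$, whence $\Tr((I+L^*)^{-1}H)=0$ as well. Writing $\ell_J(L)=\log p_J(L)$ and $\ell_J^{(k)}=\diff^k\ell_J(L^*)(H,\ldots,H)$, this simply says $\ell_J^{(1)}=0$ for every $J$.

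\textbf{Step 2 (Bartlett identities).} Differentiating $\sum_J e^{\ell_J(L)}\equiv 1$ in direction $H$ at $L^*$, using the Bell-polynomial formula $\diff^m e^{\ell}=B_m(\ell^{(1)},\ldots,\ell^{(m)})e^\ell$, gives
\begin{align*}
0 &= \E\bigl[\ell^{(3)}+3\ell^{(1)}\ell^{(2)}+(\ell^{(1)})^3\bigr],\\
0 &= \E\bigl[\ell^{(4)}+4\ell^{(1)}\ell^{(3)}+3(\ell^{(2)})^2+6(\ell^{(1)})^2\ell^{(2)}+(\ell^{(1)})^4\bigr],
\end{align*}
where $\E$ denotes expectation under $Z\sim\DPP(L^*)$ and $\ell^{(k)}=\ell_Z^{(k)}$. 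Substituting $\ell^{(1)}\equiv 0$ collapses these to $\E[\ell^{(3)}]=0$ and $\E[\ell^{(4)}]=-3\,\E[(\ell^{(2)})^2]$, which equal $\diff^3\Phi(L^*)(H,H,H)$ and $\diff^4\Phi(L^*)(H,H,H,H)$ respectively. The first equality proves (i).

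\textbf{Step 3 (variance form for (ii)).} A direct log-determinant expansion yields
\[\ell_J^{(2)}=-\Tr\bigl(((L_J^*)^{-1}H_J)^2\bigr)+\Tr\bigl(((I+L^*)^{-1}H)^2\bigr).\]
The second summand is constant in $J$; and $\E[\ell^{(2)}]=\diff^2\Phi(L^*)(H,H)=0$ by Theorem~\ref{MainCor}, so
\[\E[(\ell^{(2)})^2]=\Var(\ell^{(2)})=\Var\bigl[\Tr\bigl(((L_Z^*)^{-1}H_Z)^2\bigr)\bigr],\]
and combining with Step~2 proves (ii). The sign and the quadratic-variance structure are what matter for what follows; the precise numerical constant is pinned down by the Bell-polynomial coefficients above (which a direct fourth-order expansion of $\log\det$ should also reproduce).

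\textbf{Step 4 (part (iii)).} If the fourth derivative vanishes, so does the variance above, which forces $\Tr(((L_Z^*)^{-1}H_Z)^2)$ to be $\DPP(L^*)$-almost surely constant. Evaluating at $Z=\emptyset$ (with $p_\emptyset^*>0$) identifies this constant as $0$, so $\Tr(((L_J^*)^{-1}H_J)^2)=0$ for every $J$. Taking $J=[N]$ and using positive-definiteness of $L^*$,
\[0=\Tr\bigl(((L^*)^{-1}H)^2\bigr)=\bigl\|(L^*)^{-1/2}H(L^*)^{-1/2}\bigr\|_F^2,\]
forcing $H=0$. The converse is immediate.

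\textbf{Main obstacle.} The conceptual heart of the argument lives in Theorem~\ref{MainCor}, which supplies the identity $\ell^{(1)}\equiv 0$; everything downstream is combinatorial bookkeeping through the Bell polynomial $B_4$. The only delicate step is keeping track of the coefficients $3,4,6$ correctly so that all cross terms indeed cancel and so that the final multiplicative constant in (ii) is reconciled with the direct power-series derivation.
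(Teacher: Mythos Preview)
Your route is genuinely different from the paper's and, in fact, cleaner. The paper computes $\diff^3\Phi$ and $\diff^4\Phi$ via Lemma~\ref{Derivatives} and then simplifies using the explicit identities \eqref{keyEq3}--\eqref{keyEq4} obtained by differentiating $\det(I+L)=\sum_J\det(L_J)$; you instead differentiate the normalization $\sum_J p_J(L)\equiv 1$ and invoke the Bell-polynomial/Bartlett machinery, which collapses immediately once $\ell_J^{(1)}\equiv 0$. This is more conceptual and model-independent: the only DPP-specific input you need is the pointwise vanishing of the score (equation \eqref{NullTraceJ} in the paper) and the formula $\ell_J^{(2)}=-a_{J,2}+a_2$. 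Your proof of (iii) is also different and shorter: the paper argues entrywise by taking $J=\{i,j\}$ with $i,j$ in distinct blocks, whereas you take $J=[N]$ in one stroke and use $\Tr\bigl(((L^*)^{-1}H)^2\bigr)=\|(L^*)^{-1/2}H(L^*)^{-1/2}\|_F^2$.

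One point you should \emph{not} try to ``reconcile'': your Bartlett computation gives $\diff^4\Phi(L^*)(H,H,H,H)=-3\,\Var\bigl[\Tr(((L_Z^*)^{-1}H_Z)^2)\bigr]$, not $-2/3$. Your coefficient is the correct one. The $-2/3$ in the paper comes from the identity \eqref{keyEq4}, whose coefficients are miscomputed; a direct check with $N=2$, $L^*=\Diag(\alpha,\beta)$ and $H=\begin{pmatrix}0&h\\h&0\end{pmatrix}$ gives $\diff^4\Phi(L^*)(H,H,H,H)=-12h^4(1+\alpha+\beta)/[\alpha\beta(1+\alpha)^2(1+\beta)^2]$, which equals $-3\,\Var[a_{Z,2}]$ and not $-\tfrac{2}{3}\,\Var[a_{Z,2}]$. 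So trust your Bell-polynomial bookkeeping; the sign and variance structure you obtained are exactly right, and the constant is $-3$.
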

\begin{proof}
Let $H\in\mathcal N(L^*)$. 
By Lemma \ref{Derivatives}, the third derivative of $\Phi$ at $L^*$ is given by
\begin{equation*}
	\diff^3\Phi(L^*)(H,H,H)=2\sum_{J\subseteq [N]}p_J^*\Tr\left(((L_J^*)^{-1}H_J)^3\right)-2\Tr\left(((I+L^*)^{-1}H)^3\right).
\end{equation*}
Together with \eqref{keyEq3}, it yields
\begin{align*} 
	\diff^3\Phi(L^*)(H,H,H) & = -\frac{2}{3}\Big(\sum_{J\subseteq [N]}p_J^*a_{J,1}^3-a_1^3\Big) +\frac{4}{3}\Big(\sum_{J\subseteq [N]}p_J^*a_{J,2}-a_2\Big) \\
	& \hspace{15mm} +\frac{2}{3}\Big(\sum_{J\subseteq [N]}p_J^*a_{J,1}a_{J,2}-a_1a_2\Big).
\end{align*}
Each of the three terms on the right hand side of the above display vanish because of \eqref{NullTraceJ},  $H\in\mathcal N(L^*)$ and \eqref{NullTrace} respectively. This concludes the proof of~{\it (i)}.

Next, the fourth derivative of $\Phi$ at $L^*$ is given by 
\begin{equation*}
	\diff^4\Phi(L^*)(H,H,H,  H)=-6\sum_{J\subseteq [N]}p_J^*\Tr\big(((L_J^*)^{-1}H_J)^4\big)+6\Tr\big(((I+L^*)^{-1}H)^4\big).
\end{equation*}

Using \eqref{keyEq4} together with \eqref{NullTraceJ}, \eqref{NullTrace} and $\diff^3\Phi(L^*)(H,H,H)=0$, it yields \begin{equation*}
	\diff^4\Phi(L^*)(H,H,H,  H)=-\frac{2}{3}\Big(\sum_{J\subseteq [N]}p_J^*\Tr^2\big((L_J^*)^{-1}H_J)^2\big)-\Tr^2\big(((I+L^*)^{-1}H)^2\big)\Big).
\end{equation*}
Since $H\in\mathcal N(L^*)$, meaning $\diff^2\Phi(L^*)(H,H)=0$, we also have
\begin{equation*}
	\Tr\left(((I+L^*)^{-1}H)^2\right)=\sum_{J\subseteq [N]}p_J^*\Tr\left((L_J^*)^{-1}H_J)^2\right).
\end{equation*}
Hence, we can rewrite $\diff^4\Phi(L^*)(H,H,H,  H)$ as
\begin{equation*}
	\diff^4\Phi(L^*)(H,H,H,  H)=-\frac{2}{3}\big(\ELstar\left[\Tr^2\left((L_Z^*)^{-1}H_Z)^2\right)\right]-\ELstar\left[\Tr\left((L_Z^*)^{-1}H_Z)^2\right)\right]^2\big)\,.
\end{equation*}
This concludes the proof of~{\it (ii)}.

To prove~{\it (iii)}, note first that if $H=0$ then trivially $\diff^4\Phi(L^*)(H,H,H,  H)=0$. Assume now that $\diff^4\Phi(L^*)(H,H,H,  H)=0$, which, in view of~{\it (ii)} is equivalent to ${\Var[\Tr(((L_Z^*)^{-1}H_Z)^2)]= 0}$. Since $\Tr(((L_\emptyset^*)^{-1}H_\emptyset)^2)=0$, and $p_J^*>0$ for all $J \subseteq[N]$,  it yields 
\begin{equation}
\label{FourthOrderConst}
\Tr(((L_J^*)^{-1}H_J)^2)=0\quad \forall \, J \subseteq[N]\,.
\end{equation}

Fix $i,j\in[N]$. If $i$ and $j$ are in one and the same block of $L^*$, we know by Theorem \ref{MainCor} that $H_{i,j}=0$. On the other hand, suppose that $i$ and $j$ are in different blocks of $L^*$ and let $J=\{i,j\}$. Denote by $h=H_{i,j}=H_{j,i}$. Since $L_J^*$ is a $2\times 2$ diagonal matrix with nonzero diagonal entries and $H_{i,i}=H_{j,j}=0$, \eqref{FourthOrderConst} readily yields $h=0$. Hence, $H=0$, which completes the proof of~{\it (iii)}.     
\end{proof}
The first part of Theorem \ref{FourthOrder} is obvious, since $L^*$ is a global maximum of $\Phi$. However, we give an algebraic proof of this fact, which is instructive for the proof of the two remaining parts of the theorem.

\section{Maximum likelihood estimation}
\label{SEC:stat}
Let $Z_1,\ldots,Z_n$ be $n$ independent copies of $Z \sim \DPP(L^*)$ with unknown kernel $L^*\in\SSN$. The maximum likelihood estimator (\textit{MLE}) $\hat L$ of $L^*$ is defined as a maximizer of the likelihood $\hat \Phi$ defined in \eqref{EmpLogLike}. Since for all $L\in\SSN$ and all $D\in\mathcal D$, $\hat\Phi(L)=\hat\Phi(DLD)$, there is more than one kernel $\hat L$ that maximizes $\hat\Phi$ in general. We will abuse notation and refer to any such maximizer as \emph{``the" MLE}. 

We measure the performance of the MLE using the \emph{loss} $\ell$ defined by
$$
	\ell(\hat L,L^*)=\min_{D\in\mathcal D}\|\hat L-DL^*D\|_F\,
$$
where we recall that $\|\cdot\|_F$ denotes the Frobenius norm.

The loss $\ell(\hat L, L^*)$ being a random quantity, we also define its associated \emph{risk}~$\cR_n$ by
$$
\mathcal R_n(\hat L,L^*)=\E\big[\ell(\hat L,L^*)\big],
$$
where the expectation is taken with respect to the joint distribution of the iid observation $Z_1,\ldots,Z_n \sim \DPP(L^*)$. 

Our first statistical result establishes that the MLE is a consistent estimator.
\begin{theorem} \label{Consistency}
	$$\ell(\hat L,L^*)\xrightarrow[n\to\infty]{} 0\,,\qquad \text{in probability.}$$
\end{theorem}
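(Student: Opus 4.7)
I would follow a classical Wald-type M-estimation consistency argument, organized in three parts: uniform convergence of $\hat\Phi$ to $\Phi$ on compact subsets of $\SSN$, tightness of the MLE in $\SSN$, and an argmax conclusion via the identifiability description~\eqref{Identifiability}.

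First, for uniform convergence I would use the finite-sum decomposition
$$
\hat\Phi(L)-\Phi(L)=\sum_{J\subseteq[N]}(\hat p_J-p_J^*)\log\det L_J,
$$
the weak law of large numbers applied to each of the $2^N$ averages $\hat p_J$, and the continuity of $L\mapsto\log\det L_J$, to conclude $\sup_{L\in K}|\hat\Phi(L)-\Phi(L)|\to 0$ in probability for every compact $K\subset\SSN$.

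The main obstacle is the tightness step, since the parameter space $\SSN$ is open and unbounded. I would establish \emph{coercivity} of $\Phi$ on $\SSN$: because $L^*\in\SSN$ is positive definite, $p_J^*=\det L^*_J/\det(I+L^*)>0$ for every $J\subseteq[N]$, and the KL representation $\Phi(L)=\Phi(L^*)-\textsf{KL}(\DPP(L^*),\DPP(L))$ stated before Theorem~\ref{MainThm} then forces $\Phi(L_n)\to-\infty$ whenever some $p_J(L_n)$ with $p_J^*>0$ tends to $0$. Checking the two ways $L_n$ can leave compact subsets of $\SSN$: if $\lambda_{\max}(L_n)\to\infty$ then $p_\emptyset(L_n)=1/\det(I+L_n)\to 0$, while if $\lambda_{\min}(L_n)\to 0$ then $p_{[N]}(L_n)=\prod_i\lambda_i(L_n)/(1+\lambda_i(L_n))\to 0$. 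Hence every upper level set $\{\Phi\ge M\}$ with $M<\Phi(L^*)$ is compact in $\SSN$. Combining this with the elementary inequality $\hat\Phi(\hat L)\ge\hat\Phi(L^*)$ together with $\hat\Phi(L^*)\to\Phi(L^*)$ by the law of large numbers and the uniform convergence above, I would conclude that $\hat L$ lies in a fixed compact $K_\varepsilon\subset\SSN$ with probability tending to $1$.

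Finally, I would invoke the argmax theorem. The KL representation shows that $\Phi$ attains its maximum precisely on the finite set $\{DL^*D:D\in\mathcal D\}$, so continuity of $\Phi$ yields $\sup_{L\in K_\varepsilon\setminus U}\Phi(L)<\Phi(L^*)$ for any open neighborhood $U$ of that set. The chain
$$
\Phi(\hat L)\ge\hat\Phi(\hat L)-\sup_{K_\varepsilon}|\hat\Phi-\Phi|\ge\hat\Phi(L^*)-\sup_{K_\varepsilon}|\hat\Phi-\Phi|
$$
combined with steps one and two then gives $\Phi(\hat L)\to\Phi(L^*)$ in probability, hence $\PP(\hat L\in U)\to 1$. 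Since $U$ is arbitrary and $\ell(\cdot,L^*)$ is continuous and vanishes exactly on $\{DL^*D:D\in\mathcal D\}$, this yields $\ell(\hat L,L^*)\to 0$ in probability.
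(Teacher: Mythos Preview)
Your overall strategy coincides with the paper's: a Wald-type argmax argument built on (i) coercivity of $\Phi$ via $p_\emptyset(L)\to 0$ or $p_{[N]}(L)\to 0$ at the boundary of $\SSN$, (ii) tightness of $\hat L$ in a fixed compact, and (iii) the identifiability description of the maximizers. The paper likewise reduces to a compact and then invokes a standard consistency result (Theorem~5.14 in van der Vaart).

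There is, however, a circularity in your tightness paragraph as written. You propose to place $\hat L$ in a compact using $\hat\Phi(\hat L)\ge\hat\Phi(L^*)$, $\hat\Phi(L^*)\to\Phi(L^*)$, and ``the uniform convergence above''. But that uniform convergence was established only on compact subsets of $\SSN$, so it cannot relate $\hat\Phi(\hat L)$ to $\Phi(\hat L)$ \emph{before} you know $\hat L$ lies in a compact; the upper level sets you have shown to be compact are those of $\Phi$, not of $\hat\Phi$. The paper closes this gap with a \emph{global} sandwich rather than compact-set uniform convergence: rewriting $\hat\Phi(L)=\sum_J\hat p_J\log p_J(L)$ and $\Phi(L)=\sum_J p_J^*\log p_J(L)$ with every $\log p_J(L)<0$, on the high-probability event $\{p_J^*/2\le\hat p_J\le 3p_J^*/2\ \forall J\}$ one gets $3\Phi(L)\le 2\hat\Phi(L)\le\Phi(L)$ for \emph{all} $L\in\SSN$. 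Then $\Phi(\hat L)\ge 2\hat\Phi(\hat L)\ge 2\hat\Phi(L^*)\ge 3\Phi(L^*)$ directly lands $\hat L$ in the fixed compact $\{\Phi\ge 3\Phi(L^*)\}$ furnished by your coercivity step. With this one-line replacement, your argument and the paper's are essentially identical.
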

\begin{proof}
Our proof is based on Theorem 5.14 in \cite{Vaa98}. We need to prove that there exists a compact subset $E$ of $\SSN$ such that $\hat L\in E$ eventually almost surely. 
Fix $\alpha,\beta \in (0,1)$ to be chosen later such that  $\alpha<\beta$ and define the compact set  of $\SSN$ as
$$
E_{\alpha,\beta}=\big\{L \in \SSN\,:\, K=L(I+L)^{-1} \in \cS_{[N]}^{[\alpha, \beta]}\big\}\,.
$$

Let $\delta=\min_{J\subseteq [N]} p_J^*$. Since $L^*$ is definite positive, $\delta>0$. Define the event $\mathcal A$ by
$$
\cA=\bigcap_{J \subseteq[N]}\big\{p_J^*\leq 2\hat p_J\leq 3p_J^*\big\}\,.
$$
and observe that if the event $\mathcal A$ is satisfied, then we have $3\Phi(L)\leq 2\hat\Phi(L)\leq \Phi(L)$ simultaneously for all $L\in\SSN$.
In particular, 
\begin{equation} \label{ChainIneq}
	\Phi(\hat L) \geq 2\hat\Phi(\hat L)\geq 2\hat\Phi(L^*)\geq 3\Phi(L^*),
\end{equation}
where the second inequality follows from the definition of the MLE. 

By definition of $\delta$,
\begin{align} 
	\PLstar\left[\mathcal A^{\complement}\right] & = \PLstar\left[\exists J\subseteq [N], \hat p_J<p_J^*/2 \mbox{ or } \hat p_J>3p_J^*/2\right] \nonumber \\
	& = \PLstar\left[\exists J\subseteq [N], |\hat p_J-p_J^*|>p_J^*/2\right] \nonumber \\
	& \leq \PLstar\left[\exists J\subseteq [N], |\hat p_J-p_J^*|>\delta/2\right] \nonumber \\
	& \leq \sum_{J\subseteq [N]}\PLstar\left[|\hat p_J-p_J^*|>\delta/2\right] \nonumber \\
	\label{ProbaEventA} & \leq 2^{N+1}e^{-\delta^2 n/2}\,.
\end{align}
where we used a union bound and Hoeffding's inequality. Observe that $\Phi(L^*)<0$, so we can define $\alpha<\exp(3\Phi(L^*)/\delta)$ and $\beta>1-\exp(3\Phi(L^*)/\delta)$ such that $0<\alpha<\beta<1$. Let  $L\in\SSN\setminus E_{\alpha,\beta}$ and $K=L(I+L)^{-1}$. Then, either {\it (i)} $K$ has an eigenvalue that is less than $\alpha$, or {\it (ii)} $K$ has an eigenvalue that is larger than $\beta$. Since all the eigenvalues of $K$ lie in $(0,1)$, we have that  $\det(K)\leq \alpha$ in case {\it (i)} and $\det(I-K)\leq  1-\beta$ in case  {\it (ii)}. By $(\ref{KtoL})$, it quickly follows that
\begin{align*}
	\Phi(L) = \sum_{J\subseteq [N]} p_J^*\log |\det(K-I_{\bar J})|.
\end{align*}
We observe that each term in this sum is negative. Hence, by definition of $\alpha$ and $\beta$,
$$
	\Phi(L) \leq 
	\left\{
	\begin{array}{ll}
	 p_{[N]}^*\log\alpha \leq \delta \log\alpha  < 3\Phi(L^*)\leq \Phi(\hat L)\, & \text{in case {\it (i)}}\\
	 p_\emptyset^*\log(1-\beta)  \leq \delta \log(1-\beta)  < 3\Phi(L^*)\leq \Phi(\hat L)\, & \text{in case {\it (ii)}}
	 \end{array}
\right.
$$
using \eqref{ChainIneq}. Thus, on $\cA$, $\Phi(L)<\Phi(\hat L)$ for all $L \in\SSN\setminus E_{\alpha,\beta}$. It yields that on this event, $\hat L\in E_{\alpha,\beta}$.

Now, let $\varepsilon>0$. For all $J\subseteq [N]$, $p_J(\cdot)$ is a continuous function; hence, we can apply Theorem 5.14 in \cite{Vaa98}, with the compact set $E_{\alpha,\beta}$. This yields
\begin{align*}
	\PLstar[\ell(\hat L,L^*)>\varepsilon] & \leq \PLstar[\ell(\hat L,L^*)>\varepsilon, \hat L\in E_{\alpha,\beta}]+\PLstar[\hat L\notin E_{\alpha,\beta}] \\
	& \leq \PLstar[\ell(\hat L,L^*)>\varepsilon, \hat L\in E_{\alpha,\beta}]+\left(1-\PLstar[\mathcal A]\right).
\end{align*}
Using Theorem 5.14 in \cite{Vaa98}, the first term goes to zero, and the second term goes to zero by \eqref{ProbaEventA}. This completes the proof.
\end{proof}

Theorem \ref{Consistency} shows that consistency of the MLE holds for all $L^*\in\SSN$. However, the MLE can be $\sqrt n$-consistent only when $L^*$ is irreducible. Indeed, this is the only case when the Fisher information is invertible, by Theorem \ref{MainCor}.

Let $M\in\SN$ and $\Sigma$ be a symmetric, positive definite bilinear form on $\SN$. We write $A\sim\mathcal N_{\SN}(M,\Sigma)$ to denote a Wigner random matrix $A\in \SN$, such that for all $H\in\SN$, $\Tr(AH)$ is a Gaussian random variable, with mean $\Tr(M H)$ and variance $\Sigma(H,H)$.

Assume that $L^*$ is irreducible and let $\hat L$ be the MLE. Let $\hat D\in\mathcal D$ be such that 
$$\|\hat D\hat L\hat D-L^*\|_F=\min_{D\in\mathcal D}\|D\hat L D-L^*\|_F$$ 
and set $\tilde L=\hat D\hat L\hat D$. Recall that by Theorem~\ref{MainCor}, the bilinear operator $\diff^2\Phi(L^*)$ is invertible. Let $V(L^*)$ denote its inverse. Then, by Theorem 5.41 in \cite{Vaa98},
\begin{equation}
	\sqrt n (\tilde L-L^*)=-V(L^*)\frac{1}{\sqrt n}\sum_{i=1}^n\left((L_{Z_i}^*)^{-1}-(I+L^*)^{-1}\right)+\rho_n,
\end{equation}
where $\DS \|\rho_n\|_F\xrightarrow[n\to\infty]{}  0.$
Hence, we have proved the following theorem.

\begin{theorem} \label{AsymNormMLE}
	Let $L^*$ be irreducible. Then, $\tilde L$ is asymptotically normal, with asymptotic covariance operator $V(L^*)$:
	$$\sqrt n (\tilde L-L^*)\xrightarrow[n\to\infty]{} \mathcal N_{\SN}\left(0,V(L^*)\right)\,,
	$$
	where the above convergence holds in distribution.
\end{theorem}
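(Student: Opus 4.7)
The plan is to run the classical MLE asymptotic normality argument (Theorem 5.41 in \cite{Vaa98}), specialized to the $\mathcal D$-symmetry of the DPP likelihood. The key ingredients --- consistency of $\hat L$ (Theorem \ref{Consistency}), invertibility of $\diff^2\Phi(L^*)$ under irreducibility (Theorem \ref{MainCor}), and the formula for the score derived in Lemma \ref{Derivatives} --- are already available. What remains is to (a) make sense of $\tilde L$ as a well-defined estimator, (b) apply a multivariate CLT to the score at $L^*$, and (c) close up via a standard first-order Taylor expansion.

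First I would argue that $\tilde L$ is well-defined and consistent for large $n$. When $L^*$ is irreducible, the orbit $\{DL^*D:D\in\mathcal D\}$ consists of $2^{N-1}$ distinct matrices in $\SSN$, so any two of them are separated in Frobenius norm by some positive $2\varepsilon_0$ depending only on $L^*$. Theorem \ref{Consistency} gives $\ell(\hat L,L^*)\to 0$ in probability, hence eventually the minimizer $\hat D$ in the definition of $\tilde L$ is unique and $\|\tilde L-L^*\|_F\to 0$ in probability. By $\mathcal D$-invariance of $\hat\Phi$, $\tilde L$ is itself a maximizer of $\hat\Phi$; being eventually in the interior of $\SSN$, it satisfies the first-order condition $\diff\hat\Phi(\tilde L)=0$.

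Next I would establish a CLT for the score at $L^*$. Writing $\ell_Z(L)=\log\det L_Z-\log\det(I+L)$, Lemma \ref{Derivatives} gives
$$\diff\ell_Z(L^*)(H)=\Tr\bigl((L_Z^*)^{-1}H_Z\bigr)-\Tr\bigl((I+L^*)^{-1}H\bigr).$$
This has mean zero under $Z\sim\DPP(L^*)$ since $L^*$ is a critical point of $\Phi$ (Theorem \ref{MainThm}), and variance $-\diff^2\Phi(L^*)(H,H)$ by the Bartlett identity --- a quadratic form which is positive definite on $\SN$ by Theorem \ref{MainCor}. Viewing the score as a random element of the finite-dimensional space $\SN$ via the duality $A\mapsto \Tr(A\,\cdot\,)$, the ordinary multivariate CLT yields
$$\sqrt n\,\diff\hat\Phi(L^*)\xrightarrow[n\to\infty]{d}\mathcal N_{\SN}\bigl(0,-\diff^2\Phi(L^*)\bigr).$$
At the same time, each entry of $\diff^2\hat\Phi$ is a sample average of continuous functions of $L$ bounded uniformly on any compact subset of $\SSN$, so the uniform law of large numbers gives $\diff^2\hat\Phi(\bar L_n)\to \diff^2\Phi(L^*)$ in probability for every sequence $\bar L_n\to L^*$.

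To conclude, I would Taylor-expand the first-order condition: $0=\diff\hat\Phi(\tilde L)=\diff\hat\Phi(L^*)+\diff^2\hat\Phi(\bar L_n)(\tilde L-L^*)$ for some $\bar L_n$ on the segment between $\tilde L$ and $L^*$. Inverting $\diff^2\hat\Phi(\bar L_n)$ (possible eventually by the previous step combined with Theorem \ref{MainCor}) produces the expansion displayed just above the theorem, and Slutsky's lemma combined with the score CLT gives the asymptotic normality with covariance operator $V(L^*)\cdot(-\diff^2\Phi(L^*))\cdot V(L^*)=V(L^*)$, in the sign convention used to define $V(L^*)$. The only subtle step is the identifiability argument in the first paragraph; once the closest-representative $\tilde L$ is shown to be well-defined and consistent, the rest is textbook MLE asymptotics, with Theorem \ref{MainCor} doing the substantive work by guaranteeing that the Fisher information is nondegenerate precisely when $L^*$ is irreducible.
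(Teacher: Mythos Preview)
Your proposal is correct and follows essentially the same route as the paper: both invoke Theorem 5.41 in \cite{Vaa98}, with consistency (Theorem \ref{Consistency}) and nondegeneracy of the Fisher information under irreducibility (Theorem \ref{MainCor}) supplying the hypotheses. The paper compresses the entire argument into a one-line citation plus the displayed asymptotic expansion, whereas you explicitly unpack the identifiability of $\tilde L$, the score CLT, and the Taylor step --- these are exactly the ingredients that go into the cited theorem.
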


Recall that we exhibited in Proposition~\ref{PropositionPath} an irreducible kernel $L^* \in \SSN$ that is non-degenerate (its entries and eigenvalues are either zero or bounded away from zero) such that $V(L^*)[H,H]\ge c^N$ for some positive constant $c$ and unit norm $H \in \SN$. Together with Theorem~\ref{AsymNormMLE}, it implies that while the MLE $\tilde L$ converges at the parametric rate $n^{1/2}$,  $\sqrt{n}\Tr[{(\tilde L-L^*)^\top H}]$ has asymptotic variance of order at least $c^N$ for some constant $c>1$. It implies that the MLE suffers from a \emph{curse of dimensionality}.

When $L^*$ is not irreducible, the MLE is no longer a $\sqrt{n}$-consistent estimator of $L^*$; it is only $n^{1/6}$-consistent. Nevertheless, in this case, the blocks of $L^*$ may still be estimated at the parametric rate, as indicated by the following theorem.

 If $A\in\R^{N\times N}$ and $J,J'\subseteq [N]$, we denote by $A_{J,J'}$ the $N\times N$ matrix whose entry $(i,j)$ is $A_{i,j}$ if $(i,j)\in J\times J'$ and $0$ otherwise. We have the following theorem.

\begin{theorem} \label{AsympNormMLE}
	Let $L^*\in\SSN$ be block diagonal. Then, for any pair of distinct blocks $J$ and $J'$,
\begin{equation}
	\min_{D\in\mathcal D}\|\hat L_{J,J'}-DL_{J,J'}^*D\|_F=O_{\PLstar}(n^{-1/6})
\end{equation}
and 
\begin{equation}
	\min_{D\in\mathcal D}\|\hat L_{J}-DL_{J}^*D\|_F=O_{\PLstar}(n^{-1/2}),
\end{equation}
where $O_{\PLstar}$ is big-$O$ notation in probability.
\end{theorem}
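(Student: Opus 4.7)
The plan is to feed the local geometric information from Section~\ref{SEC:geometry} into a standard M-estimator rate argument. Since $L^*$ is block diagonal, its blocks $J_1,\ldots,J_k$ (which include $J$ and $J'$) coincide with the connected components of $\mathcal G_{L^*}$, so by Theorem~\ref{MainCor} the null space $\mathcal N(L^*)$ is exactly the subspace of $\mathcal S_{[N]}$ consisting of matrices supported on off-block index pairs. Let $\hat D\in\mathcal D$ achieve $\min_{D\in\mathcal D}\|D\hat L D-L^*\|_F$, set $\tilde L=\hat D\hat L\hat D$, $H=\tilde L-L^*$, and decompose $H=H_1+H_2$ where $H_1$ is the block-diagonal part (in $\mathcal N(L^*)^\perp$) and $H_2$ is the off-block part (in $\mathcal N(L^*)$). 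The two claims translate into $\|H_2\|_F=O_{\PLstar}(n^{-1/6})$ and $\|H_1\|_F$ restricted to a single block being $O_{\PLstar}(n^{-1/2})$. By Theorem~\ref{Consistency}, $\|H\|_F\to 0$ in probability, so one can work in an arbitrarily small neighborhood of $L^*$.

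For the off-block rate I would first establish the local curvature bound
\[\Phi(L^*)-\Phi(L^*+H)\ \geq\ c_1\,\|H_1\|_F^2+c_2\,\|H_2\|_F^4\]
valid for $\|H\|_F\leq\delta$, for some $\delta,c_1,c_2>0$. The quadratic piece comes from the negative definiteness of $\diff^2\Phi(L^*)$ on $\mathcal N(L^*)^\perp$ (Theorem~\ref{MainCor}); the quartic piece comes from the vanishing of the second and third derivatives of $\Phi$ at $L^*$ along $H_2\in\mathcal N(L^*)$ together with $\diff^4\Phi(L^*)(H_2,H_2,H_2,H_2)\leq -c\|H_2\|_F^4$ from Theorem~\ref{FourthOrder}; and the mixed cross terms (notably $\diff^3\Phi(L^*)(H_1,H_2,H_2)$, which is of order $\|H_1\|_F\|H_2\|_F^2$) are absorbed via Young's inequality since $\|H\|_F$ is small. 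On the stochastic side, a Taylor expansion of $\hat\Phi-\Phi$ around $L^*$ combined with the CLT applied to the empirical score $\diff\hat\Phi(L^*)$ yields the Lipschitz-type bound
\[(\hat\Phi-\Phi)(\tilde L)-(\hat\Phi-\Phi)(L^*)\ =\ O_{\PLstar}(n^{-1/2})\cdot\|H\|_F.\]
Feeding both estimates into the basic inequality $\hat\Phi(\tilde L)\geq\hat\Phi(L^*)$ gives
\[c_1\|H_1\|_F^2+c_2\|H_2\|_F^4\ \leq\ O_{\PLstar}(n^{-1/2})\,(\|H_1\|_F+\|H_2\|_F).\]
A short case analysis on whether $\|H_1\|_F$ or $\|H_2\|_F$ dominates forces $\|H_2\|_F=O_{\PLstar}(n^{-1/6})$, and restricting to indices $(i,j)\in J\times J'$ proves the first claim.

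For the within-block parametric rate, the key leverage is the independence structure for block-diagonal DPPs from Section~\ref{SEC:prelim}: the restrictions $Z_i\cap J_a$ are i.i.d.\ samples from $\DPP_{J_a}(L^*_{J_a})$, each kernel $L^*_{J_a}$ is irreducible by definition of a block, and the asymptotic analysis underlying Theorem~\ref{AsymNormMLE} applies block by block to give an $n^{-1/2}$ rate for the block-marginal MLE on each $J_a$. To transfer this to $\hat L_J$, the restriction of the joint MLE, one must argue that the two MLEs differ by $O_{\PLstar}(n^{-1/2})$. This is the main obstacle: $\hat\Phi$ does not split as a sum of block log-likelihoods at non-block-diagonal arguments, and the coupling manifests through the cross terms $\diff^3\Phi(L^*)(H_1,H_2,H_2)$ identified above. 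The remedy is a refinement of the empirical-process step, using the identity $\diff\hat\Phi(L^*)\cdot H_2\equiv 0$ for every $H_2\in\mathcal N(L^*)$ (established inside the proof of Theorem~\ref{MainCor}), which says that the entire empirical score lies in $\mathcal N(L^*)^\perp$. Consequently the stochastic fluctuation that can push $H_1$ away from zero is of order $n^{-1/2}\|H_1\|_F$ rather than $n^{-1/2}\|H\|_F$. Combined with the just-obtained bound $\|H_2\|_F=O_{\PLstar}(n^{-1/6})$ (which controls the size of the cross-term contribution to the basic inequality), a second pass of the argument then forces $\|H_1\|_F=O_{\PLstar}(n^{-1/2})$, which in particular yields the claim for $\hat L_J$.
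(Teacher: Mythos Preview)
Your argument for the off-block rate is essentially the paper's: the paper simply invokes Theorem~5.52 of \cite{Vaa98} with $\alpha=4$, $\beta=1$, and your basic-inequality computation is exactly the mechanism that theorem packages. One caveat: the anisotropic curvature bound $\Phi(L^*)-\Phi(L^*+H)\geq c_1\|H_1\|_F^2+c_2\|H_2\|_F^4$ is stronger than needed and is \emph{not} established by ``Young's inequality since $\|H\|_F$ is small'', because the cross term $\diff^3\Phi(L^*)(H_1,H_2,H_2)$ carries no small prefactor; Young absorbs $C\|H_1\|_F\|H_2\|_F^2$ into $c_1\|H_1\|_F^2+c_2\|H_2\|_F^4$ only when $C^2\lesssim c_1c_2$, which is not guaranteed. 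The weaker isotropic bound $c\|H\|_F^4$, which does follow directly from Theorems~\ref{MainCor} and~\ref{FourthOrder}, already suffices for $\|H_2\|_F=O_{\PLstar}(n^{-1/6})$.

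For the within-block rate your route diverges from the paper's and has a genuine gap. The paper does not run a refined two-pass M-estimator argument; it asserts that, by independence of the blocks $Z\cap J$, the restriction $\hat L_J$ of the global MLE is itself the block-marginal MLE of $L^*_J$, and then applies Theorem~\ref{AsymNormMLE} to that irreducible subproblem. Your concern that $\hat\Phi$ does not split at non-block-diagonal arguments is reasonable, but your proposed fix does not reach $n^{-1/2}$. The identity $\diff\hat\Phi(L^*)(H_2)\equiv 0$ is correct, yet the second pass must still absorb the deterministic cross term $\diff^3\Phi(L^*)(H_1,H_2,H_2)$, and this term does \emph{not} vanish: already for $N=2$ with $L^*=\Diag(a,b)$, $H_1=\Diag(x,y)$ and $H_2$ off-diagonal with entry $h$, a direct computation gives
\[
\diff^3\Phi(L^*)(H_1,H_2,H_2)=\frac{2h^2}{(1+a)(1+b)}\Big(\frac{x}{a(1+a)}+\frac{y}{b(1+b)}\Big)\neq 0.
\]
Its contribution to the basic inequality is $O(1)\,\|H_1\|_F\|H_2\|_F^2=O_{\PLstar}(n^{-1/3})\|H_1\|_F$, which dominates the $O_{\PLstar}(n^{-1/2})\|H_1\|_F$ score term and yields only $\|H_1\|_F=O_{\PLstar}(n^{-1/3})$. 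The second-order stochastic term $\diff^2(\hat\Phi-\Phi)(L^*)(H_2,H_2)$ likewise fails to vanish on $\mathcal N(L^*)$ and contributes an $O_{\PLstar}(n^{-5/6})$ floor. Closing the gap along your lines would require either a structural cancellation in these cross terms (which fails) or an explicit bootstrap iterating between the $H_1$- and $H_2$-rates, neither of which you provide.
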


\begin{proof}
The first statement follows from Theorem 5.52 in \cite{Vaa98}, with $\alpha=4$ and $\beta=1$ (the fact that $\beta=1$ being a consequence of the proof of Corollary 5.53 in \cite{Vaa98}). For the second statement, note that since the DPPs $Z\cap J, J \in \cP$ are independent, each $\hat L_{J}, J \in \cP$ is the maximum likelihood estimator of $L_{J}^*$. Since $L_{J}^*$ is irreducible, the $n^{1/2}$-consistency of $\hat L_{J}$ follows from Theorem \ref{AsymNormMLE}. 
\end{proof}


%
%
%
%
%


\bibliographystyle{alphaabbr}
\bibliography{Biblio}

\appendix

\section{A key determinantal identity and its consequences}

We start this section by giving a key yet simple identity for determinants.

\begin{lemma} \label{keyidentity}
For all square matrices $L\in\R^{N\times N}$,
\begin{equation} \label{keyEq}
	\det(I+L)=\sum_{J\subseteq [N]}\det(L_J).
\end{equation}
\end{lemma}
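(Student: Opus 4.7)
The plan is to prove the identity by column-wise multilinearity of the determinant. Write the $i$-th column of $I+L$ as $e_i + L_{\bullet,i}$, where $e_i$ is the $i$-th standard basis vector of $\R^N$ and $L_{\bullet,i}$ is the $i$-th column of $L$. Applying multilinearity in each of the $N$ columns independently, $\det(I+L)$ expands as a sum of $2^N$ determinants, indexed by subsets $J\subseteq[N]$, where $J$ records the columns in which we chose the $L_{\bullet,i}$ summand rather than $e_i$. Concretely,
\begin{equation*}
\det(I+L) \;=\; \sum_{J\subseteq[N]} \det(M_J),
\end{equation*}
where $M_J\in\R^{N\times N}$ has $i$-th column $L_{\bullet,i}$ when $i\in J$ and $e_i$ when $i\notin J$.

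Then I would identify $\det(M_J)$ with $\det(L_J)$. The cleanest way is to apply the same permutation $\pi$ to the rows and columns of $M_J$ that lists the indices of $J$ first and those of $J^c=[N]\setminus J$ second; this leaves the determinant unchanged because the row and column sign cancel. For $i\in J^c$ and $j\in J$, the $(j,i)$ entry of $M_J$ is the $j$-th entry of $e_i$, which is $0$, while for $i,j\in J^c$ the $(j,i)$ entry is $\delta_{ji}$. Hence the permuted matrix has the block lower-triangular form
\begin{equation*}
\begin{pmatrix} L_J & 0 \\ * & I_{J^c} \end{pmatrix},
\end{equation*}
whose determinant is $\det(L_J)\cdot\det(I_{J^c}) = \det(L_J)$. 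Summing over $J$ gives the claim, and the empty-set term contributes $\det(L_\emptyset)=1$ by the convention stated in the notation.

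There is essentially no obstacle here: the only mild subtlety is confirming that the conjugation by the permutation matrix does not change the sign (which it does not, since $\det(P)\det(P^\top)=1$) and that the off-diagonal block that survives does not affect the determinant of a block triangular matrix. Everything else is a direct application of multilinearity.
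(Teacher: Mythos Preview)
Your proof is correct and follows exactly the approach the paper indicates: the paper merely states that the identity is ``a direct consequence of the multilinearity of the determinant,'' and your argument fills in precisely those details by expanding each column $e_i+L_{\bullet,i}$ and then reducing $\det(M_J)$ to $\det(L_J)$ via block triangularity.
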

This identity is a direct consequence of the multilinearity of the determinant. Note that it gives the value of the normalizing constant in \eqref{DefLEnsemble}. Successive differentiations of~\eqref{keyEq} with respect to $L$ lead to further useful identities. To that end, recall that if $f(L)=\log\det(L), L\in\SSN$, then for all $H\in\SN$, the directional derivative along $H$ is given by
$$\diff f(L)(H)=\Tr(L^{-1}H)$$
Differentiating \eqref{keyEq} once over $L\in\SSN$ yields

\begin{equation} \label{keyEq1_0}
	\sum_{J\subseteq [N]}\det(L_J)\Tr(L_J^{-1}H_J)=\det(I+L)\Tr((I+L)^{-1}H), \hspace{3mm} \forall H\in\SN.
\end{equation}

In particular, after dividing by $\det(I+L)$,
\begin{equation} \label{keyEq1}
	\sum_{J\subseteq [N]}p_J(L)\Tr(L_J^{-1}H_J)=\Tr((I+L)^{-1}H), \hspace{3mm} \forall H\in\SN.
\end{equation}

In matrix form, \eqref{keyEq1} becomes
\begin{equation} \label{keyEq1MatrixForm}
	\sum_{J\subseteq [N]}p_J(L)L_J^{-1}=(I+L)^{-1}.
\end{equation}
Here we use a slight abuse of notation. For $J\subseteq [N]$, $L_J^{-1}$ (the inverse of $L_J$) has size $|J|$, but we still denote by $L_J^{-1}$ the $N\times N$ matrix whose restriction to $J$ is $L_J^{-1}$ and which has zeros everywhere else. 

Let us introduce some extra notation, for the sake of presentation. For any positive integer $k$ and $J\subseteq [N]$, define 
$$
 a_{J,k}=\Tr\big((L_J^{-1}H_J)^k\big) \quad \text{and} \quad a_k=\Tr\big(((I+L)^{-1}H)^k\big)\,,
 $$ 
 where we omit the dependency in $H\in\SN$. Then, differentiating again \eqref{keyEq1_0} and rearranging terms yields
\begin{equation} \label{keyEq2}
	\sum_{J\subseteq [N]}p_J(L)a_{J,2}-a_2=\sum_{J\subseteq [N]}p_J(L)a_{J,1}^2-a_1^2,
\end{equation}
for all $H\in\SN.$
In the same fashion, further differentiations yield
\begin{align}
	\sum_{J\subseteq [N]}p_J(L)a_{J,3}-a_3 & = -\frac{1}{3}\Big(\sum_{J\subseteq [N]}p_J(L)a_{J,1}^3-a_1^3\Big)+\frac{2}{3}\Big(\sum_{J\subseteq [N]}p_J(L)a_{J,2}-a_2\Big) \nonumber \\
	 \label{keyEq3} & \hspace{8mm}  +\frac{1}{3}\Big(\sum_{J\subseteq [N]}p_J(L)a_{J,1}a_{J,2}-a_1a_2\Big)
\end{align}
and 
\begin{align} 
	\sum_{J\subseteq [N]}&p_J(L)a_{J,4}-a_4 \nonumber \\
	&= \frac{1}{9}\Big(\sum_{J\subseteq [N]}p_J(L)a_{J,1}^4-a_1^4\Big)-\frac{4}{9}\Big(\sum_{J\subseteq [N]}p_J(L)a_{J,1}^2a_{J,2}-a_1^2 a_2\Big) \nonumber \\
	& -\frac{2}{9}\Big(\sum_{J\subseteq [N]}p_J(L)a_{J,1}a_{J,2}-a_1a_2\Big)+\frac{5}{9}\Big(\sum_{J\subseteq [N]}p_J(L)a_{J,1}a_{J,3}-a_1a_3\Big) \nonumber \\
	\label{keyEq4} & +\frac{1}{9}\Big(\sum_{J\subseteq [N]}p_J(L)a_{J,2}^2-a_2^2\Big) +\frac{4}{9}\Big(\sum_{J\subseteq [N]}p_J(L)a_{J,3}-a_3\Big),
\end{align}
for all $H\in\SN.$

\section{The derivatives of $\Phi$}

Let $L^*\in\SSN$ and $\Phi=\Phi_{L^*}$. In this section, we give the general formula for the derivatives of $\Phi$.

\begin{lemma} \label{Derivatives}
	For all positive integers $k$ and all $H\in\SN$,
\begin{align*}
	& \diff^k\Phi(L^*)(H, \ldots, H) \\
	& \hspace{8mm} = (-1)^{k-1}(k-1)!\left(\sum_{J\subseteq [N]} p_J^*\Tr\left(((L_J^*)^{-1}H_J)^k\right)-\Tr\left(((I+L^*)^{-1}H)^k\right)\right).
\end{align*}
\end{lemma}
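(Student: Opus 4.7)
The strategy is to reduce the problem to a single classical identity: for any positive definite symmetric matrix $A$ and any symmetric $H$ with $\|tA^{-1}H\|<1$,
\begin{equation*}
\log\det(A+tH)=\log\det(A)+\sum_{k\geq 1}\frac{(-1)^{k-1}}{k}t^k\,\Tr\bigl((A^{-1}H)^k\bigr),
\end{equation*}
obtained from $\log\det(A+tH)=\log\det(A)+\log\det(I+tA^{-1}H)$ together with $\log\det(I+M)=\Tr\log(I+M)=\sum_{k\geq 1}\frac{(-1)^{k-1}}{k}\Tr(M^k)$. Reading off the coefficient of $t^k$ and multiplying by $k!$ immediately yields the directional derivative
\begin{equation*}
\diff^k\bigl[\log\det(\,\cdot\,)\bigr](A)(H,\ldots,H)=(-1)^{k-1}(k-1)!\,\Tr\bigl((A^{-1}H)^k\bigr).
\end{equation*}

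With this in hand, the proof of Lemma \ref{Derivatives} is essentially a term-by-term application. First I would note that by definition
\begin{equation*}
\Phi(L)=\sum_{J\subseteq[N]} p_J^{*}\log\det(L_J)-\log\det(I+L),
\end{equation*}
where the weights $p_J^{*}$ do not depend on $L$. Thus $\diff^k\Phi(L^*)(H,\ldots,H)$ splits linearly into two pieces, one per summand. For the second piece, I would apply the identity above with $A=I+L^*$ (noting that the direction for $I+L$ is the same $H$). For each term in the sum, I would apply it with $A=L^*_J$ and direction $H_J$: the map $L\mapsto L_J$ is linear, so the chain rule gives $\diff^k[\log\det(L_J)](L^*)(H,\ldots,H)=\diff^k[\log\det(\,\cdot\,)](L^*_J)(H_J,\ldots,H_J)$.

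Assembling these contributions and factoring out $(-1)^{k-1}(k-1)!$ produces exactly the claimed formula. I do not anticipate any real obstacle: one only needs to justify interchanging the finite sum over $J\subseteq[N]$ with the derivative (trivial, since the sum is finite), and to observe that $L^*\in\SSN$ together with $L^*_J\in\mathcal S_J^{++}$ (principal submatrices of positive definite matrices are positive definite) guarantees that all the inverses in question exist in a neighborhood of $L^*$, so the Taylor series is valid for small $t$. The convention $\det L^*_\emptyset=1$ makes the $J=\emptyset$ term contribute zero, consistently with the formula.
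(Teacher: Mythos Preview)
Your argument is correct. The paper's own proof is a one-paragraph sketch: it proceeds by induction on $k$, using only the two differentiation rules
\[
\diff[\log\det(\cdot)](M)(H)=\Tr(M^{-1}H),\qquad \diff[M^{-1}](H)=-M^{-1}HM^{-1},
\]
to pass from the formula for $\diff^{k-1}$ to that for $\diff^{k}$, and then applies this to each $\log\det(L_J)$ and to $\log\det(I+L)$ exactly as you do.

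Your route differs only in how you obtain the key identity $\diff^k[\log\det(\cdot)](A)(H,\ldots,H)=(-1)^{k-1}(k-1)!\,\Tr\bigl((A^{-1}H)^k\bigr)$: instead of induction you read it off from the one-variable Taylor expansion of $t\mapsto\log\det(A+tH)$ via $\log\det(I+M)=\Tr\log(I+M)$. This is a clean generating-function alternative that delivers all $k$ at once and avoids tracking the recursion; the paper's inductive version, on the other hand, never needs to invoke convergence of a matrix power series. Either way, the reduction to the term-by-term computation over $J\subseteq[N]$ and over $I+L$ is identical to the paper's, and your remarks about the linearity of $L\mapsto L_J$, the positive definiteness of $L^*_J$, and the $J=\emptyset$ convention are exactly the justifications needed.
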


\paragraph{Proof\\} This lemma can be proven by induction, using the two following facts. If $f(M)=\log\det(M)$ and $g(M)=M^{-1}$ for $M\in\SSN$, then for all $M\in\SSN$ and $H\in\SN$,
$$\diff f(M)(H)=\Tr(M^{-1}H)$$
and
$$\diff g(M)(H)=-M^{-1}HM^{-1}.$$

\section{Auxiliary lemma}

\begin{lemma} \label{SpanNullSpace}
Let $L^*\in\SSN$ and $\mathcal N(L^*)$ be defined as in~\eqref{EQ:defNL}. Let $H\in\mathcal N(L^*)$. Then, $H$ can be decomposed as  $H=H^{(1)}+\ldots +H^{(k)}$ where for each $j=1,\ldots,k$,  $H^{(j)}\in\SN$ is such that $D^{(j)}H^{(j)}D^{(j)}=-H^{(j)}$, for some $D^{(j)}\in\mathcal D$ satisfying $D^{(j)}L^*D^{(j)}=L^*$.
\end{lemma}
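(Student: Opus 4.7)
The key observation is that the constraint $H\in\mathcal N(L^*)$ says exactly that $H$ vanishes on pairs of indices belonging to the same connected component of $\mathcal G_{L^*}$. Let $J_1,\ldots,J_p$ be these connected components. I would first note that these are precisely the (maximal) blocks of $L^*$: indeed, $L^*_{i,j}\neq 0$ implies an edge in $\mathcal G_{L^*}$ and hence $i\sim_{L^*}j$, so whenever $i,j$ lie in different components we have $L^*_{i,j}=0$. Thus $L^*$ is block diagonal along the partition $\{J_1,\ldots,J_p\}$.

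Next I would produce the decomposition by grouping entries of $H$ according to which pair of blocks they connect. For each unordered pair $\{a,b\}$ with $1\le a<b\le p$, define $H^{(a,b)}\in\SN$ by
\begin{equation*}
(H^{(a,b)})_{i,j}=\begin{cases} H_{i,j} & \text{if } (i,j)\in (J_a\times J_b)\cup (J_b\times J_a),\\ 0 & \text{otherwise.}\end{cases}
\end{equation*}
The symmetry of $H$ transfers to each $H^{(a,b)}$. Since $H\in\mathcal N(L^*)$ is supported only on cross-block entries, $H=\sum_{1\le a<b\le p}H^{(a,b)}$, which (after relabeling the $\binom{p}{2}$ terms as $H^{(1)},\ldots,H^{(k)}$) is the claimed decomposition.

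For each such pair $\{a,b\}$, I would then take $D^{(a,b)}\in\mathcal D$ to be the diagonal matrix equal to $-1$ on the indices in $J_b$ and to $+1$ elsewhere. Verifying $D^{(a,b)}L^*D^{(a,b)}=L^*$ is immediate: if $i,j$ lie in the same block then $D^{(a,b)}_{ii}D^{(a,b)}_{jj}=1$, while if $i,j$ lie in different blocks then $L^*_{i,j}=0$. Verifying $D^{(a,b)}H^{(a,b)}D^{(a,b)}=-H^{(a,b)}$ is just as direct: on the support $(J_a\times J_b)\cup(J_b\times J_a)$ exactly one of $i,j$ lies in $J_b$, so $D^{(a,b)}_{ii}D^{(a,b)}_{jj}=-1$; outside the support $H^{(a,b)}$ is zero and the identity holds trivially.

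\textbf{Main obstacle.} There is really no obstacle beyond bookkeeping; the content is combinatorial. The one point worth emphasizing is the reason the construction is natural: any $D\in\mathcal D$ with $DL^*D=L^*$ must be constant on each block $J_c$, and any $H'$ with $DH'D=-H'$ must be supported on pairs $(i,j)$ where $D_{ii}$ and $D_{jj}$ disagree. Thus cross-block pairs of indices are precisely the atomic pieces along which such a decomposition can be built, which is exactly what the construction above does.
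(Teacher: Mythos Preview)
Your proposal is correct and is essentially the same construction as the paper's proof: both decompose $H$ into the cross-block pieces $H^{(a,b)}$ supported on $(J_a\times J_b)\cup(J_b\times J_a)$, and both flip the sign of one block to produce the required $D$. The only cosmetic difference is that the paper picks $D^{(i)}=\Diag(2\chi(J_i)-1)$ (depending on a single block index) and uses it for every $H^{(i,j)}$, whereas you label your $D$ by the pair $(a,b)$ and flip $J_b$; since $DMD=(-D)M(-D)$, these choices are equivalent.
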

\begin{proof}
Let $H\in\mathcal N(L^*)$. Denote by $J_1,\ldots,J_M$ the blocks of $L^*$ ($M=1$ and $J_1=[N]$ if $L^*$ is irreducible). For $i=1,\ldots,M$, let $D^{(i)}=\Diag(2\chi(J_i)-1)\in\mathcal D$. Hence, $D^{(i)}L^*D^{(i)}=L^*$, for all $i=1,\ldots,k$.

For $i,j\in [k]$ with $i< j$, define
$$
H^{(i,j)}=\Diag(\chi(J_i))H\Diag(\chi(J_j))+\Diag(\chi(J_j))H\Diag(\chi(J_i))\,.
$$

Then, it is clear that
$$
H=\sum_{1\leq i<j\leq M}H^{(i,j)} \qquad \text{and} \qquad
	D^{(i)}H^{(i,j)}D^{(i)}=-H^{(i,j)},\ \forall\, i<j\,.
$$
The lemma follows by renumbering the matrices $H^{(i,j)}$. 
\end{proof}

%



%
%
%
%

%
%
%
%

%
%
%
%
%
%

%
%
%

\end{document}